\documentclass[11pt,a4paper,fleqn]{article}
\usepackage[a4paper]{geometry}
\usepackage{graphicx}
\usepackage{amsmath,amssymb,latexsym,graphics,epsfig}
\usepackage{hyperref}
\usepackage{color}
\usepackage{amsthm}

\usepackage{setspace}

\usepackage[space]{cite}
\usepackage{fullpage}

\newtheorem{theorem}{\bf Theorem}[section]

\newtheorem{lemma}[theorem]{\bf Lemma}

\newtheorem{claim}[theorem]{\bf Claim}
\newtheorem{remarkit}[theorem]{\bf Remark}

\newtheorem{definitiona}[theorem]{\bf Definition}

\newenvironment{definition}{%
  \begin{definitiona}
    \rm 
  }{\end{definitiona}}
\newenvironment{remark}{%
  \begin{remarkit}
    \rm 
  }{\end{remarkit}}

\usepackage[mathlines]{lineno}
\usepackage{etoolbox} 

\newcommand*\linenomathpatch[1]{%
   \expandafter\pretocmd\csname #1\endcsname {\linenomath}{}{}%
   \expandafter\pretocmd\csname #1*\endcsname{\linenomath}{}{}%
   \expandafter\apptocmd\csname end#1\endcsname {\endlinenomath}{}{}%
   \expandafter\apptocmd\csname end#1*\endcsname{\endlinenomath}{}{}%
 }
\newcommand*\linenomathpatchAMS[1]{%
    \expandafter\pretocmd\csname #1\endcsname {\linenomathAMS}{}{}%
    \expandafter\pretocmd\csname #1*\endcsname{\linenomathAMS}{}{}%
    \expandafter\apptocmd\csname end#1\endcsname {\endlinenomath}{}{}%
    \expandafter\apptocmd\csname end#1*\endcsname{\endlinenomath}{}{}%
}

\expandafter\ifx\linenomath\linenomathWithnumbers
\let\linenomathAMS\linenomathWithnumbers
\patchcmd\linenomathAMS{\advance\postdisplaypenalty\linenopenalty}{}{}{}
\else
\let\linenomathAMS\linenomathNonumbers
\fi

\linenomathpatchAMS{gather}
\linenomathpatchAMS{multline}
\linenomathpatchAMS{align}
\linenomathpatchAMS{alignat}
\linenomathpatchAMS{flalign}
\linenomathpatch{numcases}
\linenomathpatch{figure}

\begin{document}

\title{\Large The Multicolor Size-Ramsey Number of Bipartite Long
  Subdivisions\footnote{Research partially supported by FAPESP
    (2023/03167-5) and CAPES (Finance Code 001)}}

\date{}	

\author{Ramin Javadi\thanks{Department of Mathematical Sciences,
    Isfahan University of Technology, Isfahan, 84156-83111,
    Iran. E-mail: rjavadi@iut.ac.ir}
  \and
  Yoshiharu Kohayakawa\thanks{Instituto de Matem\'atica e
    Estat\'istica, Universidade de S\~ao Paulo, 05508-090 S\~ao
    Paulo, Brazil.  E-mail: yoshi@ime.usp.br.  Partially supported
    by CNPq (406248/2021-4, 407970/2023-1, 315258/2023-3)}
  \and Meysam Miralaei\thanks{Instituto de Matem\'atica e
    Estat\'istica, Universidade de S\~ao Paulo, 05508-090 S\~ao Paulo,
    Brazil.  E-mail: m.miralaei@ime.usp.br.  Supported by FAPESP
    (2023/04895-4)}}

\maketitle

\onehalfspace

\begin{abstract}
  For a positive integer $r$, the $r$-color size-Ramsey
  number~$\widehat{R}_r(H)$ of a graph $H$ is the minimum number of
  edges in a graph $G$ such that every $r$-edge coloring of $G$
  contains a monochromatic copy of $H$.  For a graph~$H$ and a
  function $\sigma:E(H)\to \mathbb{N}$, the \emph{subdivision}
  $H^\sigma$ is obtained by replacing every $e \in E(H)$ with a path
  of length $\sigma(e)$.  In~\cite{javadi25:_induced_long} it is shown
  that for all integers $r,\, D\geq 2 $, there exists a constant
  $c=c(r, D)$ such that for every graph $ H $ with maximum degree $D$
  if $H^{\sigma}$ is a subdivision of~$H$ in which
  $\sigma(e) > c \log n $ for every $e \in E(H)$, where
  $n=|V(H^\sigma)|$, then
  $ \widehat{R}_r(H^\sigma) = O\big(2^{34r}
        r^6 \log^5(r) D^5\log D\big)n.  $ We improve upon this result in the case that~$H^{\sigma}$
  is a bipartite graph and the number of colors~$r$ is large using a
  significantly different argument, obtaining the bound
  $ \widehat{R}_r(H^{\sigma}) \leq r^{400D \log D} \, n $.
\end{abstract}
	
\noindent{\small Keyword: Ramsey number, size-Ramsey number, subdivision, random graph, expanders}\\
{\small AMS subject classification: 05C55, 05D10}

\section{Introduction}	
For given graphs $H$ and $G$, and a positive integer $r$, we say that $G$ is \emph{Ramsey for} $H$, denoted by $G \longrightarrow (H)_r$, if for every $r$-edge coloring of $G$ with $r$ colors, there exists a monochromatic subgraph of $G$ isomorphic to $H$.
Ramsey's pioneering work~\cite{Ramsey} established that for any graph
$H$ and any positive integer $r$, there exists $N\in \mathbb{N}$  such
that $K_N \longrightarrow (H)_r$. The smallest such~$N$ is called the
\emph{$r$-multicolor Ramsey number} of $H$ and is denoted by $R_r(H)$.

While this perspective focuses on minimizing the number of vertices,
an alternative viewpoint—initiated by Erd\H{o}s, Faudree, Rousseau,
and Schelp~\cite{S.R.Erd}—seeks to minimize the number of edges. This
leads to the notion of \emph{size-Ramsey numbers}. For a graph $H$ and
integer $r \geq 2$, the \emph{$r$-multicolor size-Ramsey number}
of~$H$ is 
\[
\widehat{R}_r(H) = \min \{ |E(G)| : G \longrightarrow (H)_r \}.
\]

One of the central problems in this area is to determine for which
families of graphs the size-Ramsey number grows linearly with the
number of vertices. Erd\H{o}s~\cite{Erd_1} asked this question in the
case of paths and this question was addressed by Beck~\cite{Beck_1},
who proved that $\widehat{R}_2(P_n) < 900n$.  The linearity of size-Ramsey
numbers has since been established for various graph families:
cycles~\cite{Sudakov-cycle, Haxell-cycle, JKhOP, JM}, bounded-degree
trees~\cite{Friedman, Haxell-tree, Xin}, powers of paths and
cycles~\cite{Power_path, jie}, bounded powers of bounded-degree trees
(equivalently graphs of bounded treewidth and maximum
degree)~\cite{Power_tree, treewidth, jiang13:_degree_ramsey}, and logarithmic subdivisions of bounded-degree graphs~\cite{Rolling}. On the negative side, R\"{o}dl and Szemer\'{e}di~\cite{Rodl} constructed graphs on $n$ vertices with maximum degree $3$ and size-Ramsey number at least  $n \log^cn$, for a small constant $c>0$. This bound has  been
improved to $ cne^{c\sqrt{\log n}} $ for some $ c > 0 $ by Tikhomirov~\cite{Tikh}. Thus
the linearity of size-Ramsey numbers does not extend universally to bounded degree graphs. 

Given a graph $H$ and a function $\sigma:E(H)\to \mathbb{N}$, the \emph{subdivision} $H^\sigma$ is obtained by replacing each edge $e \in E(H)$ with a path of length $\sigma(e)$.
Answering a conjecture of 
Pak~\cite{Pak}, Dragani\'c, Krivelevich, and Nenadov~\cite{Rolling} proved the following theorem.

\begin{theorem}
  {\rm~\cite{Rolling}}
  \label{conj:pak}
  For every $ r,\, D\in \mathbb{N}  $ there exist  $ C,\,L > 0 $ such that if $ H $
  is a graph with $ \Delta(H) \leq D $ and $ \sigma(e)\ge L\log(|V(H^{\sigma})|) $ for all $ e \in E(H) $, then
  $\widehat{R}_r(H^{\sigma})\leq C|V(H^{\sigma})| $.
\end{theorem}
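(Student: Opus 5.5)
We would prove Theorem~\ref{conj:pak} in the spirit of Dragani\'c, Krivelevich and Nenadov. The host graph is a sparse random graph $G=G(N,p)$ with $N=Kn$, where $n=|V(H^\sigma)|$, $p=d/N$, and $K,d$ are large constants depending on $r$ and $D$. Then $|E(G)|=O(dKn)=O(n)$ with high probability. Fix an arbitrary $r$-colouring of $E(G)$. The densest colour class has at least $e(G)/r$ edges.

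The first step is to pass, inside this colour class, to a subgraph $\Gamma$ on $\Omega(N)$ vertices that is a good expander. We want every set $X\subseteq V(\Gamma)$ with $|X|\le \varepsilon N$ to have $|N_\Gamma(X)\setminus X|\ge 2D|X|$. We obtain $\Gamma$ by repeatedly deleting small non-expanding sets. The random-graph property that all sets of size $\le \varepsilon N$ span few edges guarantees that the deletion stops after removing only $o(N)$ edges. Because $G$ is locally sparse (any set of $t\le\varepsilon N$ vertices spans at most $(1+o(1))t$ edges when $t$ is small), $\Gamma$ also has the property that two large sets are joined by an edge. Concretely, any two disjoint sets of size $\ge \delta N$ in $\Gamma$ span an edge, which follows from the density of the colour class combined with the random-graph discrepancy property of $G$.

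The second step embeds $H^\sigma$ into $\Gamma$ using the Friedman--Pippenger/Haxell extendability framework, in the rolling-back form of Dragani\'c--Krivelevich--Nenadov. We maintain a partial embedding $T$ that is $(D,m)$-extendable. We first map the branch vertices $V(H)$ to vertices far apart. Then, for each edge $uv\in E(H)$ in turn, we must connect the images of $u$ and $v$ by a path of length exactly $\sigma(uv)$ that avoids the rest of $T$.

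To build a connection, we grow trees of depth $\approx c_0\log N$ from both endpoints in $\Gamma-T$. Expansion makes these reach size $\ge\delta N$. Using trees with a range of depths, an edge between the two large sets closes a path, and a short bare path is added to adjust the length. The hypothesis $\sigma(e)\ge L\log n$ with $L$ large supplies room for both trees plus the adjustment. After embedding each path we keep extendability, and whenever $T$ grows too large we remove (roll back) vertices of degree deficit using the standard deletion lemma of Friedman--Pippenger/Glebov/Montgomery. The embedding therefore stays within $\le N/(2D)$ vertices, which holds since $|V(H^\sigma)|=n\le N/K$.

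The main obstacle is hitting the path length \emph{exactly}. Connecting two large sets only yields some path of length between the two tree depths. We plan to handle this by connecting from a prescribed level of each tree. Extendability lets us choose the two tree depths freely, say $a$ and $b$ with $a+b+1=\sigma(e)$. Each depth is achievable once it is $\ge c_0\log N$, which is exactly where $L$ must exceed roughly $2c_0$. The technical care lies in ensuring that the sets of vertices at exact distance $a$ (not merely $\le a$) from the root in an extendable tree avoiding $T$ are large. For this we would use the Montgomery-style lemma that an extendable graph can be extended by a path of any prescribed length $\ge C\log N$ to reach a linear-size set of endpoints.
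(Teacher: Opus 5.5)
This theorem is not proved in the paper. It is quoted from Dragani\'c, Krivelevich and Nenadov, whose proof the authors describe as based on Szemer\'edi's regularity lemma, so there is no in-paper argument to compare with. Judged on its own, your proposal has a genuine gap at the step the whole connection argument rests on. You assert that $\Gamma$ has an edge between any two disjoint sets of size at least $\delta N$, ``by the density of the colour class combined with the discrepancy property of $G$''. This does not follow, and it is false in general.

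Split $V(G)$ into halves $A$ and $B$. Colour blue all edges between $A$ and $B$, and red all other edges. Blue has about half the edges, hence at least $e(G)/r$, and that is all your argument uses about the chosen colour. But every subgraph $\Gamma$ of blue on $\Omega(N)$ vertices is bipartite. So it has two disjoint linear-size sets inside one side with no edge between them, and it contains no odd cycle at all. Hence it cannot contain $H^\sigma$ whenever some cycle of $H$ has odd total $\sigma$-length. No argument that uses only the density of a single colour class can prove the statement for non-bipartite $H^\sigma$. Your closing step (choose depths $a,b$ with $a+b+1=\sigma(e)$, then join the two leaf sets by an edge) implicitly assumes a \emph{non-bipartite} joined structure; in a bipartite colour the parity of the connecting path is forced.

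What is missing is a genuinely Ramsey-theoretic step. It must use the whole colouring to select a colour together with a linear-size set $U$ on which that colour is joined for arbitrary pairs of disjoint sets. One way to get it:
\begin{itemize}
\item Apply the sparse regularity lemma.
\item By Ramsey's theorem, find a monochromatic clique on $t=\Theta(D)$ clusters in the almost complete reduced graph.
\item Let $U$ be the union of these clusters. Two disjoint subsets of $U$ of size about $(2/t+\varepsilon)|U|$ meet distinct clusters in at least $\varepsilon$ times the cluster size each, and regularity supplies an edge of that colour.
\end{itemize}
A step of this kind is presumably where the regularity lemma enters in the cited proof.

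The present paper sidesteps this only for bipartite $H^\sigma$. There, Lemma~\ref{lem:alphajoined} needs joinedness only across the bipartition. Moreover, in the proof of Theorem~\ref{thm:main} joinedness is not claimed for the colour class; it is \emph{derived} by contradiction. A colour avoiding $H^\sigma$ would have an empty $\alpha N'\times\alpha N'$ pair inside every large pair, and the inductive count then forces the other colours to carry almost all edges.

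Finally, the exact path length, which you single out as the main obstacle, is not one. With good (extendable) embeddings one embeds abstract trees whose leaves all sit at a prescribed depth: a bare path followed by a tree of height $\lceil\log_{D-1}(\alpha N)\rceil$, as in Lemma~\ref{lem:alphajoined}. Any edge between the two leaf sets then yields a path of exactly the required length, with no Montgomery-type lemma needed.
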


The proof of Theorem~\ref{conj:pak} is based on Szemer\'edi's regularity lemma and makes no attempt to optimize the constant~$C$. 

In a recent paper~\cite{javadi25:_induced_long} we reprove
Theorem~\ref{conj:pak} with the goal of obtaining reasonable 
explicit bounds for the constants, in particular avoiding the use of
regularity methods. Specifically, our main result for general
subdivisions in~\cite{javadi25:_induced_long} is as follows.

\begin{theorem}{\rm~\cite{javadi25:_induced_long}}
  \label{thm:odd_subd}
  Let $r,\, D \geq 2$ be integers, and let $H^\sigma$ be a subdivision
  of a graph~$H$ with maximum degree~$D$ with
  $ \sigma(e) > {2\log_2^2r\log_{D}n+Cr^4 2^{2r}\log^2 D}$
  for every $e \in E(H)$, where $n=|V(H^\sigma)|$ and~$C$ is a
  suitably large absolute constant. Then
  \begin{equation}
    \label{eq:1}
    \widehat{R}_r(H^\sigma) = O\big(2^{34r}
        r^6 \log^5(r) D^5\log D\big)n.
  \end{equation}
\end{theorem}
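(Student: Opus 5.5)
The host graph will be a sparse random graph $G=G(N,p)$ with $N=Kn$ vertices and $p=d/N$, where $K$ and $d$ depend only on $r$ and $D$. Then $|E(G)|=O(dK n)$ with high probability. The argument has four steps: find a monochromatic expander inside $G$, relate the length condition on $\sigma$ to the diameter of that expander, embed the branch vertices of $H$, and connect them by vertex-disjoint paths of the prescribed lengths.

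\emph{Monochromatic expander.} First I record the pseudorandom properties of $G$ that hold with high probability: every two disjoint sets of size at least $\varepsilon N$ span an edge, and there are good upper bounds on the number of edges inside small sets. Given an $r$-colouring, pigeonhole gives a colour class with at least $|E(G)|/r$ edges. Repeatedly deleting vertices whose colour-$1$ degree is small, or whose neighbourhoods expand poorly, leaves a subgraph $G_1$ on $\Omega(N/r)$ vertices. I want $G_1$ to be a $(k,\alpha)$-expander, meaning every set $X$ with $|X|\le k$ satisfies $|N(X)|\ge \alpha|X|$, and for sets of size $k$ I want linear expansion into the rest. Expansion is logarithmically fast, so $G_1$ has diameter $O(\log_\alpha N)$. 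This is exactly why the hypothesis $\sigma(e)>2\log_2^2 r\,\log_D n+Cr^42^{2r}\log^2D$ suffices: there is enough room to route long paths. The constants in \eqref{eq:1} come from the cost of extracting this expander. The $2^{O(r)}$ factors arise from iterating a density-increment or colour-halving argument, possibly via a sequence of nested colour restrictions, to guarantee good expansion in a single colour.

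\emph{Embedding the branch vertices.} I embed $V(H)$ greedily onto vertices of $G_1$ with large expansion. Each branch vertex needs $D$ pairwise disjoint rooted trees of depth about $\log n$, so these are placed first, in the style of Friedman--Pippenger tree embedding.

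\emph{Connecting paths.} For each edge of $H$ in turn, I connect the two corresponding trees by a path of exactly length $\sigma(e)$, avoiding everything used so far. The tool is an extendability-and-rollback scheme in the spirit of Glebov, Montgomery and Dragani\'c--Krivelevich--Nenadov~\cite{Rolling}: maintain the invariant that the used set $U$ is $(D,k)$-extendable, so every vertex keeps enough unused expansion. The paths are found by growing two BFS balls in $G_1-U$ to sizes above $k$ and joining them through the bipartite-expansion property. Exact lengths are obtained by adjusting along a preliminary long path, and the slack in $\sigma(e)$ beyond twice the diameter absorbs this adjustment. After each path is added, the rollback lemma removes any leaves that would break extendability.

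\emph{Main obstacle.} The hardest step is the first one. I need a monochromatic subgraph that is simultaneously an expander for small sets and robust under deleting the used vertices, and I need it with constants only exponential in $r$ and polynomial in $D$. Plain pigeonhole gives density but not expansion. My first attempt would be a \emph{sparse} Ramsey argument: show that in $G(N,p)$ every $r$-colouring contains, for some colour, a subgraph on $\Omega(2^{-r}N)$ vertices in which every set of size at most $\beta N$ expands by a factor $\Omega(D)$. Proving this uses the edge distribution of $G$ and an iterative pruning procedure, where each failed expansion in one colour forces density of the remaining colours on a smaller set. Each such step loses a constant factor, which is where the $2^{34r}$ comes from. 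The polylogarithmic factors in $r$ and $D$ then come from choosing $d$ large enough to guarantee expansion $\alpha\ge 2D$ after these losses.
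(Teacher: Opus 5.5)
There is no proof here to compare against: this paper quotes Theorem~\ref{thm:odd_subd} from \cite{javadi25:_induced_long} and does not prove it. Its own machinery (the bipartite host of Lemma~\ref{quasi}, $\alpha$-joinedness, and the tree-and-join embedding of Lemma~\ref{lem:alphajoined}) covers only bipartite $H^\sigma$. Judged against what the statement requires, your proposal has a genuine gap: \textbf{it never deals with parity}.

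The theorem allows odd $\sigma(e)$ and non-bipartite $H^\sigma$. For instance, $H=K_3$ with every $\sigma(e)=\ell$ odd gives the odd cycle $C_{3\ell}$. Nothing in your Step~1 stops the chosen colour class, and hence $G_1$, from being bipartite. Split $V(G)$ into $2^r$ parts indexed by $\{0,1\}^r$, and colour an edge between parts $\mathbf a\neq\mathbf b$ by the first coordinate in which they differ. Then each colour class is bipartite apart from edges inside parts. If those edges all get colour $r$, colour~$1$ alone carries about half of $E(G)$ and is bipartite. So your pigeonhole-then-prune procedure lands exactly on a bipartite expander. In a bipartite $G_1$, joining two BFS trees only yields paths of the parity fixed by the bipartition, and ``adjusting along a preliminary long path'' cannot change that parity.

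What you need is a colour class that is both expanding and \emph{robustly non-bipartite}: for example, a large set joined to itself in that colour, or odd cycles that survive deletion of the used set $U$. Then two given vertices can be linked by paths of either parity. This, not expansion, is what forces exponential dependence on $r$. Since $R_r(C_m)\ge 2^{r-1}(m-1)+1$ for odd $m$, and a Ramsey graph without isolated vertices has at least $R_r(C_m)$ vertices, any $G\to(C_m)_r$ has $\Omega(2^r m)$ edges. By contrast, when all $\sigma(e)$ are even and parity is automatic, Theorem~\ref{thm:even_subd} gives a bound polynomial in $r$. So your reading of $2^{34r}$ as the cost of iterated expansion-pruning is misdirected, and the step that actually incurs it is missing.

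Even setting parity aside, several steps are asserted rather than proved:
\begin{itemize}
\item Step~1 is only a plan (``my first attempt would be\dots''). You give no argument that some colour contains a subgraph on $\Omega(2^{-r}N)$ vertices with small-set expansion \emph{and} the monochromatic joinedness that Step~4 uses. The edge joining the two BFS balls must have the chosen colour and avoid $U$, and pseudorandomness of $G$ alone does not give that.
\item A pure density argument cannot work here. The one this paper uses for Theorem~\ref{thm:main} shows that every subgraph of the host with at least $e(G)/r$ edges contains $H^\sigma$. But a maximum cut of $G$ keeps at least half the edges and is bipartite, so your argument must genuinely use all $r$ colours.
\item Step~3 as written is infeasible. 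Trees whose leaf sets are large enough to be joined have linearly many vertices, so they cannot all be placed at once in a host with $Kn$ vertices. They must be grown one edge of $H$ at a time and then rolled back, as in your Step~4 and in the proof of Lemma~\ref{lem:alphajoined}.
\item The link between the length hypothesis, including its $\log_2^2 r$ factor, and the diameter of $G_1$ is asserted, not derived.
\end{itemize}
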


In~\cite{javadi25:_induced_long} we also prove that if we further
restrict to the case that $\sigma(e)$ is even for every $e \in E(H)$,
then Theorem~\ref{thm:odd_subd} can be significantly improved.

\begin{theorem}{\rm~\cite{javadi25:_induced_long}}
  \label{thm:even_subd}
  Let $r, \, D \geq 2$ be integers, and let $H^\sigma$ be a
  subdivision of a graph~$H$ with maximum degree~$D$ with $\sigma(e)$
  even and larger than ${2\log_2^2r\log_{D}n+Cr^4 2^{2r}\log^2 D}$ for
  every $e \in E(H)$, where $n=|V(H^\sigma)|$ and~$C$ is a suitably
  large absolute constant.  Then
  \begin{equation}
    \label{eq:2}
    \widehat{R}_r(H^\sigma) = O\big(r^{34}
        D^5\log D\big)n.
  \end{equation}
\end{theorem}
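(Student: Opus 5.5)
Theorem~\ref{thm:even_subd} is quoted from~\cite{javadi25:_induced_long}, so here I only sketch how I would prove it. The host graph $G$ is a sparse random bipartite-type graph, say $G(N,p)$ with $N = r^{c}D^{c'} n$ and $pN$ a suitable constant. I would first show that in any $r$-coloring there is a color class containing a highly expanding bipartite subgraph. Then I would embed $H^\sigma$ into that subgraph using the Friedman--Pippenger / Haxell tree-embedding machinery together with a path-connecting (rolling-back) technique. Evenness of $\sigma(e)$ makes $H^\sigma$ bipartite with all branch vertices in the same class. This is what allows working inside a single bipartite expander and avoids the $2^{r}$-type losses that come from handling odd cycles.

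\textbf{Step 1: a dense monochromatic bipartite piece with expansion.} Given an $r$-coloring of $G$, the densest color has at least $|E(G)|/r$ edges. By a standard deletion argument, one extracts a subgraph $\Gamma$ of minimum degree $\gtrsim pN/r$. Random-graph properties (every set $X$ of size $\le \varepsilon N$ spans few edges, and between disjoint large sets there are many edges) then give that $\Gamma$ restricted to a bipartition $(A,B)$ expands small sets by a factor $\ge 2D+2$ up to size about $N/(rD)$. Moreover, any two linear-size sets on appropriate sides are joined by paths of every even length at least a suitable $\ell_0 = O(\log_D n)$. Choosing the bipartition so that it is consistent with the parity structure is exactly where the even lengths are used. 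The polynomial dependence $r^{34}$ comes from running these density and expansion estimates with only polynomial losses in $r$.

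\textbf{Step 2: embedding.} Place the branch vertices of $H$ one by one in side $A$, with the subdivision paths grown via a Haxell-type extendability invariant ($(D+1,m)$-extendable subgraphs). Each path of even length $\sigma(e)$ is built in three parts. First, grow a tree of depth about $\tfrac12\log_{D} n$ from each endpoint. Then connect the two trees by a path of the required remaining even length, using the "many edges between large sets" property. Finally, roll back the unused branches of the trees, so that the extendability invariant is preserved. The lower bound on $\sigma(e)$ ($2\log_2^2 r\log_D n$ plus a constant depending on $r$ and $D$) supplies enough length for the tree growth in both directions plus the connecting segment.

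\textbf{Main obstacle.} The hard step is the connecting step. The two trees must be joined by a path of exactly the prescribed even length inside $\Gamma$ without destroying extendability, and this must hold with linear total size. I would first try to prove a lemma: in a $(D+1,m)$-extendable bipartite expander, any two vertices on the same side that can be extended have a path of every even length $\ge K\log_D n$ between them. This path should be addable while keeping extendability, shown by iterated expansion of the two sides until they reach size $m$ and then using the random-graph edge distribution between them.
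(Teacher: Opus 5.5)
\textbf{There is a genuine gap, in the \emph{Moreover} of Step~1.} This paper only quotes Theorem~\ref{thm:even_subd}, so there is no proof here to match. The closest argument in the paper is Lemma~\ref{lem:alphajoined} together with the proof of Theorem~\ref{thm:main}, and it shows what your sketch is missing.

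You claim that in $\Gamma$ any two linear-size sets on opposite sides are joined, by an edge or by paths of every even length $\ge\ell_0$. This does not follow from minimum degree, small-set expansion and the edge distribution of $G$. The random-graph property only gives many edges of $G$ between two large sets, and all of them may receive other colours.

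For a concrete failure, split the vertex set into halves $U_1,U_2$; in the bipartite setting take $U_j=P_j\cup Q_j$ with $P_j\subseteq V_1$ and $Q_j\subseteq V_2$. Colour every edge inside $U_1$ or inside $U_2$ red, and spread the crossing edges over the other colours. Red carries about half of all edges, so it is (at least tied for) the densest colour. Your deletion argument gives a red $\Gamma$ of minimum degree $\Omega(pN)$ that expands small sets. Yet there is no red path at all between linear-size sets in $U_1$ and in $U_2$. So your proposed connecting lemma can only hold under a joinedness hypothesis, as in the known connecting lemmas for extendable subgraphs, and nothing in your argument supplies that for a colour class.

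What is actually needed is one of two things: find a large vertex set on which a single colour is $\alpha$-joined, or derive a contradiction from its absence. This is where all the dependence on $r$ lives. In this paper it is done by the recursion in the proof of Theorem~\ref{thm:main}. By Lemma~\ref{lem:alphajoined}, a colour class without $H^\sigma$ has an empty $\alpha N'\times\alpha N'$ pair inside every pair $V_1'\subseteq V_1$, $V_2'\subseteq V_2$ of size $N'\ge n/\alpha$. Iterating this forces the other colours to carry almost all edges. Even this costs $N/n\ge r^{O(\log(1/\alpha)/\alpha)}=r^{O(D\log D)}$, and the regularity-based proof of Theorem~\ref{conj:pak} gives no usable bound at all. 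Your sentence that $r^{34}$ comes from \emph{running these estimates with only polynomial losses} therefore asserts the main content of the theorem without a mechanism.

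\textbf{The obstacle you single out is not the real one.} Once joinedness is available, the connecting step is easy. As in Lemma~\ref{lem:alphajoined}, hang from each endpoint a stem followed by a tree of maximum degree $D$ whose leaves all sit at the same depth and number at least $\alpha N$. That depth is about $\log_{D-1}(\alpha N)$, not $\tfrac12\log_D n$. Choose the two stem lengths so that the leaf sets lie on opposite sides and every leaf-to-leaf edge closes a path of length exactly $\sigma(e)$. Joinedness supplies such an edge, and rolling back via Lemma~\ref{lem:good_removing} restores goodness. No lemma about paths of every even length is needed.

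The same lemma handles bipartite $H^\sigma$ with edges of either parity inside one bipartite joined graph. So \emph{working inside a single bipartite expander} is not what evenness buys you. The polynomial dependence in the even case has to come from how the monochromatic joined structure is obtained and used, which is exactly the part your sketch leaves blank.
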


Clearly, the subdivisions~$H^\sigma$ considered in
Theorem~\ref{thm:even_subd} are bipartite graphs.  In this paper we
prove that an additional improvement of Theorem~\ref{thm:odd_subd} is
possible when the subdivided graph is bipartite, \textit{without}
assuming that every~$\sigma(e)$ is even.

\begin{theorem}
  \label{thm:main}
  Let $r, \, D\geq 2$ be integers, and let~$H$ be a graph with maximum
  degree~$D$.  Suppose~$H^\sigma$ is a subdivision of~$H$
  with~$\sigma(e)\geq 2\log_{D-1} n$ for every $e \in E(H)$, where
  $n=|V(H^\sigma)|$.  Furthermore, suppose~$H^\sigma$ is bipartite.
  Then
  \begin{equation}
    \label{eq:3}
    \widehat{R}_r(H^\sigma) \leq r^{\,400 D\log D} \, n.
  \end{equation}
\end{theorem}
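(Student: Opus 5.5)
We take as host graph $G$ a random bipartite graph (or a bipartite sparse expander) with parts of size $N=Kn$, where $K=r^{c D\log D}$, and edge probability $p=d/N$ for a suitable $d$. Then $|E(G)|=O(dN)$, and we choose $d$ and $K$ so that $dK\le r^{400D\log D}$. The bipartiteness of $H^\sigma$ is the key reason a bipartite host suffices. It also lets us work with a single colour class inside a bipartite graph, which avoids the parity obstacles (odd cycles) that forced the $2^{O(r)}$ loss in Theorem~\ref{thm:odd_subd}.

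\textbf{Step 1: a dense monochromatic expander.} In any $r$-colouring of $G$, some colour class has at least $|E(G)|/r$ edges. By a standard density-increment or sparse-expander lemma, this class contains a subgraph $\Gamma$ on $m\ge N/r^{O(1)}$ vertices. We want $\Gamma$ to be bipartite and to have an expansion property: every set $X$ with $|X|\le \varepsilon m$ satisfies $|N_\Gamma(X)|\ge D\,|X|$, at least on small scales. To get this we use the pseudorandomness of $G$ (every pair of disjoint sets of size $\beta N$ spans $(1\pm o(1))p\beta^2N^2$ edges). Iteratively removing non-expanding sets then costs only a polynomial-in-$r$ factor per round, and $O(D\log D)$ rounds suffice. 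This gives $r^{O(D\log D)}$ total loss, which accounts for the $\log D$ in the exponent.

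\textbf{Step 2: embedding $H^\sigma$ by rolling-back tree embedding.} We embed $H^\sigma$ into $\Gamma$ vertex by vertex using the Friedman--Pippenger / Haxell extendability framework, as in Dragani\'c--Krivelevich--Nenadov \cite{Rolling} with the rollback technique. The branch vertices of $H$ are placed in turn. Each subdivided path of length $\sigma(e)\ge 2\log_{D-1}n$ is realised by the following connecting lemma. From each endpoint we grow $(D-1)$-ary trees of depth about $\log_{D-1}n$; these cover a linear-size set. Then a single edge of $\Gamma$ joins the two trees, with parity consistent with the bipartition. Because $H^\sigma$ is bipartite and $\Gamma$ is bipartite, the two trees can be taken with leaves on prescribed sides. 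Hence we need a path of exactly length $\sigma(e)$ between vertices whose sides are determined by the parity of $\sigma(e)$. This parity is automatically consistent, since $H^\sigma$'s bipartition is inherited. Exact lengths are obtained by first following a bare path of length $\sigma(e)-2\lceil\log_{D-1}n\rceil$, which is embedded greedily via extendability, and then connecting. Keeping the embedded subgraph $(D,m)$-extendable, and rolling back vertices that break extendability, ensures the procedure never gets stuck as long as $|V(H^\sigma)|\le \varepsilon m/D$. This holds because $m\ge Kn/r^{O(1)}$.

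\textbf{Main obstacle.} The hard part is Step 1: obtaining, inside a single colour class, a bipartite subgraph with expansion factor $D$ (not just $2$) on sets up to linear size. It must also satisfy the dense-pair property needed for the connecting lemma. All of this has to come with loss only $r^{O(D\log D)}$ rather than exponential in $r$. We would first try a Pósa-type argument on the majority colour restricted to a random bipartite $G$, using edge density $p\cdot r^{-1}$ between all large sets to control bad sets. If direct expansion fails, we would pass to an auxiliary "$D$-expanding core" via repeated deletion and bound the deleted mass by counting edges in small sets of $G$. Computing the final constants is then routine and yields \eqref{eq:3}.
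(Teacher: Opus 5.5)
\textbf{There is a genuine gap in Step~1.} Your Step~2 is essentially the paper's Lemma~\ref{lem:alphajoined}: good embeddings, two trees grown from the endpoints with heights chosen by the parity of $\sigma(e)$ so that the leaf sets lie on opposite sides, one closing edge, then rollback via Lemma~\ref{lem:good_removing}. Step~1, however, is where the theorem actually lives, and you defer it as ``routine''.

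You correctly note that Step~2 needs a dense-pair property. Concretely, in a \emph{single} colour, every two sets of size about $\alpha m$ on opposite sides of $\Gamma$ must span an edge; this is the paper's $\alpha$-joinedness. Nothing in Step~1 produces it.
\begin{itemize}
\item Cleaning the majority colour by deleting non-expanding sets (for instance a high-degree core, plus the absence of small dense sets in $G$) gives small-set expansion. But small-set expansion says nothing about large pairs: a disjoint union of two expanders has it, and trees grown in different components never meet.
\item Your suggested source for the property, that the majority colour has ``edge density $p r^{-1}$ between all large sets'', is false. Pseudorandomness of $G$ controls $e_G(U,W)$, not how the adversary distributes those edges among colours.
\item For example, split each $V_i$ into $r$ equal blocks $V_i^0,\dots,V_i^{r-1}$ and give the edges between $V_1^j$ and $V_2^{j'}$ colour $j'-j \bmod r$. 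Every colour gets about $e(G)/r$ edges. Yet each colour class is a disjoint union of $r$ pseudorandom pieces and is empty on $r^2-r$ of the $r^2$ block pairs.
\end{itemize}
So joinedness can only be expected inside a suitably chosen piece. Locating such a piece in an arbitrary colouring, with only $r^{O(D\log D)}$ loss, is exactly the missing argument. Your ``$O(D\log D)$ rounds of polynomial loss'' is not backed by anything.

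\textbf{What the paper does instead.} It looks for joinedness directly, with $\alpha=1/(6D+14)$, and gets expansion from it rather than the other way round. By Lemma~\ref{lem:expender_in_join}, an $\alpha$-joined bipartite graph has an induced subgraph that is $(6\alpha N,D+2)$-expanding and carries a good embedding of the branch vertices; your Step~2 then runs. A joined piece in some colour is found by a recursive count:
\begin{itemize}
\item Suppose a colour class $G'$ with $e(G')\ge e(G)/r$ contains no $H^\sigma$. Then for all $V_1'\subseteq V_1$, $V_2'\subseteq V_2$ of equal size $N'\ge n/\alpha$, the graph $G'[V_1'\cup V_2']$ is not $\alpha$-joined.
\item Hence some sets $A_1\subseteq V_1'$, $A_2\subseteq V_2'$ of size $\alpha N'$ carry no edge of $G'$. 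All $e_G(A_1,A_2)\ge(1-\delta)p\alpha^2N'^2$ edges there lie in $F$, the union of the other colours.
\item Split $V_i'\setminus A_i$ into blocks $A_i^j$ of size $\alpha N'$ with union $B_i$. Apply induction to $(B_1,B_2)$ and to the pairs $(A_1,A_2^j)$, $(A_1^j,A_2)$. This yields $e_F(V_1',V_2')\ge(1-\delta)\bigl(1-(n/(\alpha N'))^{\lambda}\bigr)pN'^2$ with $\lambda=\alpha/(3\log(1/\alpha))$. The induction closes because $(1-\alpha)^{2-\lambda}+2\alpha^{1-\lambda}-2\alpha^{2-\lambda}<1$.
\item At $N'=N$ this contradicts $e(F)\le(1-1/r)e(G)$ once $(n/(\alpha N))^{\lambda}\le r^{-2}$, i.e.\ once $N/n\ge \alpha^{-1}r^{2/\lambda}=r^{O(D\log D)}$.
\end{itemize}
The exponent $D\log D$ therefore comes from the decay rate $\lambda=\Theta(1/(D\log D))$ in this recursion, not from a count of deletion rounds.
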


In general, the bound in~\eqref{eq:3} is worse than the bound
in~\eqref{eq:2}, but of course~\eqref{eq:3} applies to more general
subdivisions~$H^\sigma$.  Furthermore, the method of proof employed
here is entirely different from the one
in~\cite{javadi25:_induced_long}. 

\subsection*{Conventions and notation}

For a graph $ G $, we write $ V(G) $, $ E(G) $ and $ e(G) $ for the vertex set, the edge set and the number of edges of $ G $, respectively.  
For $ v \in V(G) $, by $ N_{G}(v) $ we mean the set of all neighbors of $ v $ in $G$.
For a subset $ X\subseteq V(G) $, we define the neighborhood of $X$ as~$ N_G(X)=\bigcup_{x\in X}N_G(x) $.
In a rooted tree $T$, the maximum distance of a vertex from the root is the height of $ T $. If a tree has only one vertex (the root), its height is zero.
Let $ A,\, B \subset V(G) $ with $A\cap B=\emptyset$. Then $ E_G(A,B)= \{ xy\in E(G) : x\in A, \, y\in B\} $
is the set of edges connecting a vertex of $ A $ to a vertex of $ B $. Also, $e_G(A,B)=|E_G(A,B)|$. A bipartite graph $ G $ with a bipartition $ (V_1, V_2) $ is denoted by $ G=G(V_1, V_2) $.

Given functions $f(n), \, g(n)$ and $h(n)$, we write $\Omega(h(n))=f(n)=O(g(n))$ if there exist  absolute constants $ C_1,\,  C_2>0 $ such that $C_1h(n)\leq f(n)\leq C_2g(n)$ for  all sufficiently large~$n$.

All logarithms are taken to base 
$ e $, unless stated otherwise. Throughout the paper, we omit floor and ceiling symbols whenever they are not essential.
 
\section{Tools}
In this section we collect several auxiliary results and notions that will be used throughout the paper.

We begin by recalling the model of a random bipartite graph.
 The \emph{binomial random bipartite graph} $G(N,N,p)$ is defined as the probability distribution over the family of bipartite graphs $G=G(V_1,V_2)$ with $|V_1|=|V_2|=N$, in which each potential edge $\{i,j\}$ with $i\in V_1$ and $j\in V_2$ appears independently with probability $p$. We say that
a property $A_N$ of $G(N, N, p)$ holds \emph{asymptotically almost surely} (a.a.s.) if the probability that $A_N$ holds tends to $1$ as $N\to\infty$.

It was shown in~\cite{JM} that there exist explicit bipartite graphs whose local edge densities mimic those expected in random bipartite graphs. The following lemma that is Lemma 2.2 in~\cite{JM}, provides a precise quantitative formulation of this phenomenon.
\begin{lemma}{\rm~\cite{JM}}\label{quasi}
	Let $c_1$ be a positive integer and $c_2$, $c_3$, $\varepsilon$, $\delta$ be positive numbers such that $c_3\leq c_1$, $0<\epsilon\leq 1/2$ and $3/2\geq \delta >\sqrt{6{\log({c_1e}/{c_3})/(c_2c_3)}} $. Then there exists $n_0=n_0(c_1,c_2,c_3,\epsilon, \delta)$ for which the following holds for every $n\geq n_0$. 
	
	Let $ N=c_1n $ and $ p={c_2/n} $. 
	There exists a bipartite graph $ G=G(V_1,V_2) $, with $ |V_i|=N $, $ i\in\{1,2\} $, such that
	\begin{enumerate}
		\item \label{quasi 1} $ (1-n^{\varepsilon-1/2}) pN^2 \leq |E(G)|\leq (1+n^{\varepsilon-1/2}) pN^2 $,
		\item \label{quasi 2} For every two subsets $ U\subseteq V_1 $ and $ W\subseteq V_2 $ with $ |U|=u$ and $ |W|=w$, where $u, \,w\geq c_3n $, we have
		\[
		\big| e_G(U,W) -puw\big|\leq \delta puw.
		\]
	\end{enumerate} 
\end{lemma}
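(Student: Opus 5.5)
The plan is to take $G$ to be the binomial random bipartite graph $G(N,N,p)$ with $N=c_1n$ and $p=c_2/n$. We then show that each of properties~\ref{quasi 1} and~\ref{quasi 2} fails with probability $o(1)$ as $n\to\infty$. For $n\ge n_0$ both properties then hold simultaneously with positive probability, so a graph with both properties exists. The only probabilistic tool needed is the Chernoff bound in Bernstein form: for a binomial variable $X$ with mean $\mu$ and $0<\delta\le 3/2$,
\[
\Pr\big(|X-\mu|>\delta\mu\big)\le 2\exp\Big(-\frac{\delta^2\mu}{2+2\delta/3}\Big)\le 2\exp\big(-\delta^2\mu/3\big),
\]
where the last inequality uses $2+2\delta/3\le 3$ for $\delta\le 3/2$. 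This is exactly why the hypothesis caps $\delta$ at $3/2$.

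For property~\ref{quasi 1}, note that $|E(G)|\sim\mathrm{Bin}(N^2,p)$ has mean $\mu=pN^2=c_1^2c_2n$. Apply the bound with deviation parameter $\delta'=n^{\varepsilon-1/2}\le 1$. The failure probability is at most $2\exp(-\delta'^2\mu/3)=2\exp(-c_1^2c_2n^{2\varepsilon}/3)$, which tends to $0$ because $\varepsilon>0$.

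For property~\ref{quasi 2}, fix sizes $u,w\ge c_3n$ and, by symmetry, assume $u\le w$. For fixed $U,W$ we have $e_G(U,W)\sim\mathrm{Bin}(uw,p)$ with mean $puw$, so the bad event has probability at most $2\exp(-\delta^2c_2uw/(3n))$. The number of pairs $(U,W)$ is at most $\binom{N}{u}\binom{N}{w}\le (eN/u)^u(eN/w)^w$. Since $u,w\ge c_3n$, each factor is bounded as in $(eN/u)^u\le \exp(uL)$ with $L=\log(c_1e/c_3)$, and $L\ge 1$ because $c_3\le c_1$. Using $u\le w$ and $u\ge c_3 n$, the union bound for these sizes is therefore at most
\[
2\exp\Big(2wL-\frac{\delta^2c_2c_3}{3}\,w\Big)=2\exp(-\eta w),\qquad \eta:=\frac{\delta^2c_2c_3}{3}-2L .
\]
The hypothesis $\delta>\sqrt{6L/(c_2c_3)}$ is precisely $\eta>0$. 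Summing over the at most $N^2$ choices of $(u,w)$ gives a total failure probability of at most $2c_1^2n^2e^{-\eta c_3n}=o(1)$.

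The main (and really only) delicate step is this union bound in property~\ref{quasi 2}. There one must bound the entropy term by $2wL$ using the smaller set in the product $uw$, so that the constant $6$ in the hypothesis on $\delta$ comes out exactly. The rest is routine, and $n_0$ is chosen so that the two $o(1)$ terms sum to less than $1$.
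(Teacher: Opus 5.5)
Your proof is correct. You take $G=G(N,N,p)$, use the Chernoff bound $\Pr(|X-\mu|\ge\delta\mu)\le 2e^{-\delta^2\mu/3}$ for $\delta\le 3/2$, and apply a union bound in which $\binom{N}{u}\binom{N}{w}\le e^{(u+w)L}$ is beaten by $\max(u,w)\,(2L-\delta^2c_2c_3/3)<0$; this reproduces exactly the stated hypotheses (you should also take $n_0\ge c_2$ so that $p\le 1$). The paper gives no proof of its own and simply quotes the lemma from~\cite{JM}, where it rests on this same standard random-graph, Chernoff and union-bound argument.
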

Next, we introduce a convenient framework to describe bipartite graphs with bounded part sizes and degree constraints.

For positive integers $n_1$, $n_2$, $D_1$ and $D_2$ an $(n_1,n_2;D_1,D_2)$-bipartite graph $G$ is a graph with a bipartition $(X_1,X_2)$, such that  $|X_i|\leq n_i$ and the maximum degree of vertices in $X_i$ is at most $D_i$ for each $i\in\{1,2\}$.  When $n_1=n_2=n$ and $D_1=D_2=D$, we simply write $(n,D)$-bipartite graph.
\begin{definition}
Let $n$ be a positive integer and $D>0$. We say that a bipartite graph $G=G(V_1,V_2)$ is $(n,D)$-expanding if for each $i\in \{1,2\}$ and every subset $X\subseteq V_i$ with $|X|\leq n$, we have $|N_G(X)|\geq D |X|$. 
\end{definition}
The central tool in our work is the classical tree-embedding lemma of Friedman and Pippenger~\cite{Friedman}. This fundamental result provides a general expansion criterion that guarantees the existence of embeddings of bounded degree trees into large graphs. Intuitively, under sufficient expansion, one can embed a tree greedily by starting at the root and at each step there will be enough fresh neighbors available to add a pendent vertex. A bipartite refinement of this result was later obtained by Haxell and Kohayakawa~\cite{Haxell-tree}. 

More recently, Dragani\'c, Krivelevich, and Nenadov~\cite{Rolling} introduced the notion of \emph{good embeddings}, which provides a flexible framework for handling iterative embedding arguments. For our purposes we need the bipartite analogue of this notion.

\begin{definition}\label{def:good}{}
	Let $n$ and $D$ be positive integers. Suppose that $G = G(V_1, V_2)$ and $F$ are bipartite graphs and let $\phi: F \hookrightarrow G$ be an embedding of $F$ into $G$. We say that $\phi$ is \emph{$(n,D)$-good} if for each $i \in \{1,2\}$ and every subset $X \subseteq V_i$ with $|X| \le n$, we have 
	\begin{equation}\label{good-embedding}
	\big| N_G(X) \setminus \phi(F) \big| \ \ge \ \sum_{v \in X} \big( D - \deg_{F}(\phi^{-1}(v)) \big) \;+\; \big| \phi(F) \cap X \big|.
	\end{equation}
	Here, for vertices $v \in V(G)$ not used by $\phi$ in the embedding, we set $\deg_{F}(\phi^{-1}(v)) := 0$.
\end{definition}
Definition \ref{def:good} coincides with the one used by Friedman and
Pippenger~\cite{Friedman} and Haxell and Kohayakawa~\cite{Haxell-tree} up to the additional term on the right hand side of~\eqref{good-embedding}, which accounts for vertices already occupied by the embedding.

The next theorem shows that under suitable expansion assumptions, the property of being a good embedding is preserved when extending the embedded graph by attaching pendent vertices. This is the bipartite counterpart of~\cite[Theorem~2.3]{Rolling}. As the argument is essentially the same, we only sketch it in Appendix~\ref{app}.
\begin{theorem} \label{thm:good_adding}
Let $n$ and $D$ be  positive integers. Consider bipartite graphs  $F$ and $G$ and suppose that $\phi: F\hookrightarrow G$ is a $(2n,D)$-good embedding. 
Assume further that $F$ is $(n,D)$-bipartite and $G$ is   $(2n,D+2)$-expanding. If $F'$ is an $(n,D)$-bipartite graph which is obtained from $F$ by successively adding new pendent vertices, then there exists a $(2n,D)$-good embedding $\phi': F'\hookrightarrow G$.
\end{theorem}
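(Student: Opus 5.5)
It suffices to add one pendent vertex at a time. Once the one-step statement is proved, we iterate it along the sequence $F=F_0\subset F_1\subset\dots\subset F_k=F'$, where each $F_{j+1}$ is $F_j$ plus one pendent vertex. Every $F_j$ is a subgraph of $F'$, so every $F_j$ is $(n,D)$-bipartite, and the hypotheses are preserved at each stage. So let $F'=F+v$, where $v$ is a new leaf attached to $u\in V(F)$, and suppose $\phi(u)=x\in V_1$ (the other case is symmetric). Since $\deg_{F'}(u)\le D$, we have $\deg_F(u)\le D-1$. Applying \eqref{good-embedding} to $X=\{x\}$ gives $|N_G(x)\setminus\phi(F)|\ge D-\deg_F(u)\ge 1$, so $x$ has unused neighbours. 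We will choose $y\in N_G(x)\setminus\phi(F)$ and set $\phi'=\phi\cup\{v\mapsto y\}$. The issue is choosing $y$ so that $\phi'$ is still $(2n,D)$-good. Call a set $X\subseteq V_i$ with $|X|\le 2n$ \emph{critical} for $\phi$ if equality holds in \eqref{good-embedding}.

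The key step is to see how each side of \eqref{good-embedding} changes when we pass to $\phi'$. Fix $X\subseteq V_i$ with $|X|\le 2n$.
\begin{itemize}
\item If $x\in X$, the sum on the right drops by $1$, since $\deg(u)$ grows.
\item If $y\in X$, the sum drops by $D$ and $|\phi(F)\cap X|$ grows by $1$, a net drop of $D-1\ge 1$.
\item If $y\in N_G(X)$, the left side drops by $1$.
\end{itemize}
Since $x\in V_1$ and $y\in V_2$, the vertices $x$ and $y$ are never both in $X$. Hence the only sets that can fail are the critical sets $X$ with $y\in N_G(X)$ and $x\notin X$ (and $y\notin X$, which is automatic when $X\subseteq V_1$). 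So it suffices to find $y\in N_G(x)\setminus\phi(F)$ lying outside $N_G(X)$ for every critical $X\subseteq V_1$ with $x\notin X$.

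The main obstacle is controlling the critical sets, and this is the Friedman--Pippenger / Dragani\'c--Krivelevich--Nenadov argument. First, the union of two critical sets is critical whenever its size is at most $2n$, by the usual submodularity of $|N(\cdot)\setminus\phi(F)|$ against the modular right-hand side. Second, a critical set has size at most $n$: if $X$ is critical, the right side of \eqref{good-embedding} is at most $D|X|+|X|$, while the left side is at least $|N_G(X)|-|\phi(F)\cap V_2|$. By $(2n,D+2)$-expansion the left side is at least $(D+2)|X|-n$, and comparing the two gives $|X|\le n$.

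Now let $Y$ be the union of all critical subsets of $V_1$. Taking unions two at a time, each of size at most $n+n=2n$, shows that $Y$ is itself critical, so $|Y|\le n$. We need $x\notin Y$ to be harmless, so we instead consider $Y'=Y\cup\{x\}$, which has size at most $2n$. Applying goodness of $\phi$ to $Y'$: since $\deg_F(u)\le D-1$, the vertex $x$ contributes at least $1$ more to the right-hand side of $Y'$ than to that of $Y$. Because $Y$ is tight, this forces $N_G(x)\setminus(\phi(F)\cup N_G(Y))\neq\emptyset$. Any $y$ in this set works, since it avoids $N_G(X)$ for every critical $X$, which completes the one-step extension.

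Finally, if $x$ itself lies in a critical set, the drop at $x$ is compensated as well: the right side falls by $1$ and the left side by at most $1$, so those sets stay fine. The submodularity and the size bound are routine; I expect the bookkeeping with the extra term $|\phi(F)\cap X|$ in the union step to be the only delicate point.
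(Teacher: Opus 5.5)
\textbf{Verdict: genuine but easily fixed gap.} Your strategy matches the paper's: critical sets, submodularity, closure under union with the bound $|X|\le n$ from $(2n,D+2)$-expansion, and testing goodness on the union enlarged by $\phi(u)$. As written, however, the final step fails.

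\textbf{The gap.} You take $Y$ to be the union of \emph{all} critical subsets of $V_1$. Some critical set may contain $x=\phi(u)$; for instance, $X=\{x\}$ is critical whenever $|N_G(x)\setminus\phi(F)|=D-\deg_F(u)+1$. In that case:
\begin{itemize}
\item $x\in Y$, so $Y'=Y$;
\item your claim that $x$ ``contributes at least $1$ more'' to the right-hand side of $Y'$ is false;
\item $N_G(x)\subseteq N_G(Y)$, so the set $N_G(x)\setminus(\phi(F)\cup N_G(Y))$ from which you pick $y$ is empty.
\end{itemize}
Your last paragraph correctly shows that critical sets containing $x$ need not be avoided. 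But it does not repair the definition of $Y$, and your own reduction had already identified the right family.

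\textbf{The fix.} Let $Y$ be the union of the critical sets $X\subseteq V_1$ with $x\notin X$, which is exactly the family your reduction requires. Then:
\begin{itemize}
\item $Y$ avoids $x$, and by your union argument it is critical with $|Y|\le n$;
\item applying \eqref{good-embedding} to $Y\cup\{x\}$, which has size at most $n+1\le 2n$, gives $|N_G(x)\setminus(\phi(F)\cup N_G(Y))|\ge D-\deg_F(u)+1\ge 2$. The extra $+1$ comes from $x\in\phi(F)$ entering the term $|\phi(F)\cap X|$.
\end{itemize}
Any $y$ in this set then works. The paper's proof by contradiction avoids the issue automatically: its witnesses $X_a$ satisfy $\phi(w)\notin X_a$ by the difference formula, so their union never contains $\phi(w)$.

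\textbf{A smaller slip.} When $y\in X$, the term for $y$ in the sum goes from $D$ to $D-1$, since $\deg_{F'}(v)=1$. So the sum drops by $1$, not by $D$. Together with the $+1$ in $|\phi(F)\cap X|$, the right-hand side is unchanged. The left-hand side is also unchanged, because $y\notin N_G(X)$ for $X\subseteq V_2$. Such sets therefore remain fine, but for this reason rather than the one you give.
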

 
In a complementary direction, the following lemma shows that the goodness property is also preserved when successively removing degree (at most) one vertices from the embedded graph. This is the bipartite analogue of~\cite[Lemma~2.4]{Rolling} and we include a proof in Appendix~\ref{app2} for completeness.
\begin{lemma}\label{lem:good_removing}
Let $n$ and $D$ be  positive integers. Consider an $(n,D)$-bipartite graph $F$ and suppose that $\phi: F \hookrightarrow G$ is an $(n,D)$-good embedding. Assume that $F'$ is obtained from $F$ by successively deleting vertices of degree at most one. Then the restriction of $\phi$ to $F'$ is also $(n,D)$-good.
\end{lemma}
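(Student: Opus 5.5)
The plan is to argue by induction on the number of deleted vertices, so it suffices to treat a single deletion. Let $u$ be a vertex of $F$ with $\deg_F(u)\le 1$, and let $F'=F-u$. The graph $F'$ is again $(n,D)$-bipartite, so applying the one-step statement repeatedly gives the lemma. Write $v=\phi(u)$. If $u$ has a neighbour $w$ in $F$, write $x=\phi(w)$. Since $\phi$ is an embedding into the bipartite graph $G$, the edge $vx$ lies in $G$, so $v$ and $x$ lie in different parts $V_1,V_2$.

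Fix $i\in\{1,2\}$ and $X\subseteq V_i$ with $|X|\le n$. I will compare both sides of~\eqref{good-embedding} for $\phi$ restricted to $F'$ with the same quantities for $\phi$ on $F$.

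\emph{Left-hand side.} Since $\phi(F')=\phi(F)\setminus\{v\}$, we get
\[
|N_G(X)\setminus\phi(F')| = |N_G(X)\setminus\phi(F)| + [v\in N_G(X)],
\]
so the left side never decreases.

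\emph{Right-hand side.} Only the terms for $v$ and $x$ can change. Let $d=\deg_F(u)\in\{0,1\}$.
\begin{itemize}
\item If $v\in X$, then $v$ contributes $D-d+1$ for $F$ (the $+1$ comes from the occupancy term $|\phi(F)\cap X|$). For $F'$ it contributes $D$, since $v$ is now unused and its degree is set to $0$. The net change is $d-1\le 0$.
\item If $x\in X$, then $\deg_{F'}(w)=\deg_F(w)-1$, so the right side increases by exactly $1$. The occupancy term is unchanged, because $x$ is still in $\phi(F')$.
\end{itemize}

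Because $v$ and $x$ lie in different parts, at most one of $v\in X$ and $x\in X$ holds. This gives three cases.
\begin{enumerate}
\item If $x\in X$, then $v\in N_G(X)$ because $vx\in E(G)$. Both sides increase by exactly $1$, so the inequality is preserved.
\item If $v\in X$, the right side does not increase and the left side does not decrease.
\item If neither holds, the right side is unchanged and the left side does not decrease.
\end{enumerate}
In all cases~\eqref{good-embedding} continues to hold for the restriction of $\phi$ to $F'$.

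The only delicate point is case~1, where the degree drop at $x$ raises the right-hand side. The key observation there is that the freed vertex $v$ is then a neighbour of $X$ outside $\phi(F')$, which exactly compensates. This is the reason for restricting to deletions of vertices of degree at most one: a deleted vertex with two embedded neighbours $x,x'$ in $X$ would raise the right side by $2$, while freeing only one vertex on the left.
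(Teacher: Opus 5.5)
Your proof is correct and follows essentially the same case analysis as the paper's. Both reduce to a single deletion, then split according to whether the image of the deleted vertex or the image of its neighbour lies in $X$, using bipartiteness to see that at most one can. Your version is slightly more complete: you treat deleted vertices of degree $0$ explicitly and note that the left-hand side can only grow in the remaining cases, whereas the paper writes out only the degree-one case.
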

We next recall another useful structural concept introduced in~\cite{JM}, which allows us to pass from certain dense bipartite configurations to bipartite graphs with strong expansion.
\begin{definition}{\rm~\cite{JM}}
Let $G=G(V_1,V_2)$ be a bipartite graph with $|V_1|=|V_2|=N$ and let $\alpha\in(0,1)$ be a real number. We say that $G$ is \emph{$\alpha$-joined} if for every pair of subsets $A\subseteq V_1$ and $B\subseteq V_2$ with $|A|, \,|B|\geq \alpha N$, we have $e(A,B)\neq0$.
\end{definition}
The next lemma shows that every $\alpha$-joined bipartite graph contains a large induced subgraph with strong expansion properties. Moreover, this subgraph also supports a good embedding of a suitable null graph.
\begin{lemma} \label{lem:expender_in_join} 
Let $G=G(V_1,V_2)$ be an $\alpha$-joined bipartite graph with $0<\alpha<1/5$ and $|V_1|=|V_2|=N$. Then there exists an induced subgraph $G'=G'(V'_1,V'_2)$ of $G$ such that for each $i\in \{1,2\}$, 
	\begin{itemize}
		\item $|V'_i|\geq (1-\alpha)N$,
		\item for every $X\subset V'_i$, with $|X|\leq \alpha N$, we have $|N_{G'}(X)|> (1-4\alpha)/{(2\alpha)} |X|$, and
		\item for every $X\subset V'_i$, with $|X|>  \alpha N$, we have $|N_{G'}(X)|> (1-2\alpha)N$.
	\end{itemize}
	In particular, $G'$ is $(6\alpha N, (1-2\alpha)/(6\alpha))$-expanding. 
	Moreover, there exists an embedding $\phi: F\hookrightarrow G'$, where $F$ is a null bipartite graph with $\alpha N$ vertices in each part, such that $\phi$ is a
	$(6\alpha N, {(1-4\alpha)}/{(6\alpha)})$-good embedding.
\end{lemma}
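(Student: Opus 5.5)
We prove Lemma~\ref{lem:expender_in_join} by a greedy deletion procedure, then derive the expansion and good-embedding statements.

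\textbf{Step 1: deletion.} Start with $V'_1=V_1$, $V'_2=V_2$. As long as some $i\in\{1,2\}$ has a set $X\subseteq V'_i$ with $|X|\le \alpha N$ and $|N_{G'}(X)|\le \frac{1-4\alpha}{2\alpha}|X|$, where $G'=G[V'_1\cup V'_2]$, delete $X$ from $V'_i$. Let $S_1\subseteq V_1$ and $S_2\subseteq V_2$ be the sets of all deleted vertices.

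\textbf{Step 2: each $S_i$ stays below $\alpha N$.} Suppose $|S_1|$ first exceeds $\alpha N$ at some moment, while $|S_2|\le\alpha N$. Since each deleted block has size at most $\alpha N$, we get $\alpha N<|S_1|\le 2\alpha N$. Each block $X\subseteq V_1$ had few neighbours in the current $V'_2$ when it was removed. Later deletions only shrink $V'_2$, so only vertices of $S_2$ can reconcile the count. Hence
\[
|N(S_1)\setminus S_2|\le \tfrac{1-4\alpha}{2\alpha}|S_1| \le (1-4\alpha)N .
\]
Therefore $B=V_2\setminus(N(S_1)\cup S_2)$ satisfies $|B|\ge N-(1-4\alpha)N-\alpha N=3\alpha N\ge\alpha N$. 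But $e(S_1,B)=0$, contradicting $\alpha$-joinedness. Bounding $|N(S_1)\setminus S_2|$ through the union of blocks is the delicate part, since neighbourhoods are measured against a shrinking $V'_2$. The fix is to account for neighbours inside the final $V_2\setminus S_2$ only, which is exactly what the contradiction needs. The symmetric argument applies to $S_2$, and running the argument at the first moment either set exceeds $\alpha N$ handles both sides simultaneously. So the process terminates with $|V'_i|\ge(1-\alpha)N$. By construction, every $X\subseteq V'_i$ with $|X|\le \alpha N$ then satisfies $|N_{G'}(X)|>\frac{1-4\alpha}{2\alpha}|X|$.

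\textbf{Step 3: large sets.} Let $X\subseteq V'_1$ with $|X|>\alpha N$. The set $V'_2\setminus N_{G'}(X)$ has no edges to $X$, so by $\alpha$-joinedness it has fewer than $\alpha N$ vertices. Thus $|N_{G'}(X)|>|V'_2|-\alpha N\ge (1-2\alpha)N$.

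\textbf{Step 4: expansion.} Take $|X|\le 6\alpha N$. If $|X|\le\alpha N$, the ratio $\frac{1-4\alpha}{2\alpha}\ge\frac{1-2\alpha}{6\alpha}$ is enough. Otherwise $|N(X)|>(1-2\alpha)N\ge \frac{1-2\alpha}{6\alpha}|X|$. So $G'$ is $\bigl(6\alpha N,\frac{1-2\alpha}{6\alpha}\bigr)$-expanding.

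\textbf{Step 5: the good embedding.} Put $D=\frac{1-4\alpha}{6\alpha}$. Embed the null graph $F$ onto arbitrary sets $A_i\subseteq V'_i$ with $|A_i|=\alpha N$. Since $\deg_F\equiv 0$, condition~\eqref{good-embedding} reads
\[
|N_{G'}(X)\setminus\phi(F)|\ \ge\ D|X|+|X\cap\phi(F)|,
\]
and it suffices to show $|N_{G'}(X)|-\alpha N\ge (D+1)|X|$. The subtraction of $\alpha N$ accounts for the embedded vertices on the opposite side, which is why $D$ is smaller than the expansion constant.

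\begin{itemize}
\item For small $X$ we only have $|N(X)|>\frac{1-4\alpha}{2\alpha}|X|=3D|X|$, which does not absorb $\alpha N$ when $|X|$ is tiny. So $A_i$ must be chosen with care, not arbitrarily.
\item The natural choice is to pick $A_i$ and apply the same deletion idea to $G'-A_1-A_2$, or to choose $A_i$ inside the deleted-robust part. Concretely, I would first delete $\alpha N$ extra vertices per side and run Step 1 on the remainder. I would then re-add them as $\phi(F)$, so that neighbourhoods outside $\phi(F)$ already expand by the factor $\frac{1-4\alpha}{2\alpha}\ge 2D+1$. This requires rechecking Step 2 with the target set size doubled; the $(1-4\alpha)$ slack is what allows it.
\item Large $X$ are handled by Step 3: $(1-2\alpha)N-\alpha N\ge (D+1)\cdot 6\alpha N$.
\end{itemize}
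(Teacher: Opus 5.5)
Your Steps 1--4 are correct and follow the paper's argument; your Step 2 count even has slack, since nothing is reserved. The gap is in the `moreover' part. You rightly note that an arbitrary choice of $\phi(F)$ fails for small $X$, but the repair you sketch does not work.

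Suppose you set aside $A_i$ with $|A_i|=\alpha N$, run the deletion only on $V_i\setminus A_i$, and then put $A_i$ back as $\phi(F)$. Then the vertices of $A_i$ were never tested. Some $a\in A_1$ may have all its neighbours in $A_2$, or none at all. Then $X=\{a\}$ violates both the second bullet and the goodness inequality $|N_{G'}(X)\setminus\phi(F)|\ge D|X|+|X\cap\phi(F)|$. If instead the reserved vertices remain deletable, a reserve of only $\alpha N$ per side can be deleted entirely, leaving no room for $F$.

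The missing idea is the paper's. Fix $Y_i\subseteq V_i$ with $|Y_i|=2\alpha N$ \emph{before} deleting. Let the deletion act on all of $V_i$, reserved vertices included, and delete $X$ whenever $|N(X)\setminus(Y_1\cup Y_2)|\le\frac{1-4\alpha}{2\alpha}|X|$.
\begin{itemize}
\item Step 2 then reads $|N_G(X_2)|\le\frac{1-4\alpha}{2\alpha}|X_2|+|Y_1|+|X_1|\le(1-4\alpha)N+3\alpha N=(1-\alpha)N$. This still leaves $\alpha N$ non-neighbours, which is exactly what the constant $1-4\alpha$ is calibrated for.
\item At most $\alpha N$ vertices of each $Y_i$ are deleted, so $F$ embeds into $Y_i\setminus X_i$.
\item Every $X$ with $|X|\le\alpha N$, including those meeting $\phi(F)$, satisfies $|N_{G'}(X)\setminus\phi(F)|\ge|N_{G'}(X)\setminus(Y_1\cup Y_2)|>3D|X|\ge D|X|+|X\cap\phi(F)|$.
\end{itemize}
The last step needs only $2D\ge 1$, not your $\frac{1-4\alpha}{2\alpha}\ge 2D+1$. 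It does require $\alpha\le 1/7$, so neither your route nor the paper's gives the goodness claim on all of $0<\alpha<1/5$. This is harmless, since the lemma is only used with $\alpha=1/(6D+14)$.

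Finally, your large-$X$ bullet is false as written: $(D+1)\cdot 6\alpha N=(1+2\alpha)N>(1-3\alpha)N$. Bounding $|X\cap\phi(F)|$ by $|X|$ is too lossy there. Use $|X\cap\phi(F)|\le\alpha N$ instead. Then the right-hand side is at most $(1-4\alpha)N+\alpha N=(1-3\alpha)N<|N_{G'}(X)|-\alpha N\le|N_{G'}(X)\setminus\phi(F)|$.
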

\begin{proof}

	For each $i\in \{1,2\}$, let $Y_i$ be an arbitrary subset of $V_i$ with $|Y_i|=2\alpha N$. If there exists a set $X\subset V_i$, with $|X|\leq \alpha N$ and $|N_G(X)\setminus (Y_1\cup Y_2)|\leq {(1-4\alpha)}/{(2\alpha)}|X|$, then we remove such an $X$ from $G$ and we iterate this process. Let $X_1$ and $X_2$ denote the removed vertices from $V_1$ and $V_2$, respectively.
	
	 We claim that $|X_1|,\, |X_2|\leq \alpha N$. Suppose otherwise, and assume w.l.o.g.\ that~$|X_2|>\alpha N$. Consider the moment when $X_2$ first exceeds $\alpha N$. Then $|X_1|\leq \alpha N$ and $\alpha N< |X_2|\leq 2\alpha N $. Therefore,
	\[
	|N_G(X_2)|\leq \dfrac{1-4\alpha}{2\alpha} |X_2|+ |Y_1|+|X_1|\leq 
	\dfrac{1-4\alpha}{2\alpha}\cdot 2\alpha N+ 3\alpha N = (1-\alpha)N. 
	\]
	Hence, the set of non-neighbors of $X_2$ in $G$ has size at least $\alpha N$, contradicting the assumption that $G$ is $\alpha$-joined. 
	Thus indeed $|X_1|,\,|X_2|\leq \alpha N$. 
	
	Let $G'$ be the graph obtained from $G$ by removing the vertices in $X_1\cup X_2$. That is $G'=G'(V_1', V_2')$ with $V_1'=V_1\setminus X_1$ and $V_2'=V_2\setminus X_2$.
	It is clear that for each subset $X\subseteq V'_i$ with $|X|\leq \alpha N$ we have
	$|N_{G'}(X)|> (1-4\alpha)/{(2\alpha)} |X|$. If $|X|> \alpha N$, then since $G$ is $\alpha$-joined, the set of non-neighbors of $X$ in $G$ has size less than $\alpha N$. Thus
	\[
	|N_{G'}(X)|\geq |V_{i+1}'|-\alpha N \geq (1-2\alpha)N.
	\] 
	
	To verify that $G'$ is $(6\alpha N, (1-2\alpha)/(6\alpha))$-expanding, note that if $|X|\leq \alpha N$, then 
	\[
	|N_{G'}(X)|> \dfrac{1-4\alpha}{2\alpha} |X| \geq \dfrac{1-2\alpha}{6\alpha}|X|,
	\]
	while if $\alpha N< |X|\leq 6\alpha N$, then 
	\[
	|N_{G'}(X)|\geq (1-2\alpha) N \geq \dfrac{1-2\alpha}{6\alpha}|X|.
	\]
	
Finally, let $F=F(U_1, U_2)$ be the null graph with $\alpha N$ vertices in each part. Set $Y_1'=Y_1\setminus X_1$ and $Y_2'=Y_2\setminus X_2$. Clearly, $|Y_1'|, \,|Y_2'|\geq \alpha N$. Let $\phi:F\hookrightarrow G'$ be an embedding such that maps all vertices in $U_i$ to $Y_i'$ for $i\in \{1, 2\}$. We prove that $\phi$ is $ (6\alpha N, (1-4\alpha)/(6\alpha))$-good embedding. To see this, let $X\subseteq V_i'$ be fixed for $i\in \{1, 2\}$. If  $|X|\leq \alpha N$, then 
	\[|N_{G'}(X)\setminus (Y_{i+1}')|> \dfrac{1-4\alpha}{2\alpha} |X|-\alpha N \geq \dfrac{1-6\alpha}{2\alpha} |X|\geq \dfrac{1-4\alpha}{6\alpha} |X|+ |X\cap Y_i'|.  \]
That is 
\[
|N_{G'}(X)\setminus\phi(F)|\geq \frac{1-6\alpha}{2\alpha}|X|+|X\cap \phi(F)|.
\]
	If $\alpha N< |X|\leq 6\alpha N$, then as $ |X\cap \phi(F)|\leq \alpha N $ we have
	\[
	|N_{G'}(X)\setminus F|\geq (1-2\alpha)N-\alpha N\geq \dfrac{1-4\alpha}{6\alpha} |X|+|X\cap \phi(F)|.  
	\]
	This shows that $\phi:F\hookrightarrow G'$ is a $(6\alpha N, {(1-4\alpha)}/{(6\alpha)})$-good embedding. 
\end{proof}

\section{Proof of Theorem~\ref{thm:main}}
In this section, we prove Theorem~\ref{thm:main}. Our approach proceeds in two steps.  
First, we show that for a suitable choice of $\alpha$, every $\alpha$-joined bipartite graph contains the long bipartite subdivision of any bounded degree graph (Lemma~\ref{lem:alphajoined}).  
Second, we prove that if $G$ is the random bipartite graph provided by Lemma~\ref{quasi}, then every sufficiently large subgraph $J \subseteq G$ with $e(J)\geq e(G)/r$ (with $r\ge 2$) is $\alpha$-joined, and therefore $J$ contains the long bipartite subdivision of every bounded degree graph as a subgraph.

\begin{lemma} \label{lem:alphajoined}
	Let $n$ and $D$ be positive integers.
	Suppose that $H$ is a graph and $H^\sigma$ is an $(n,D)$-bipartite subdivision of $H$. 
	Also, let $G=G(V_1,V_2)$ be an $\alpha$-joined bipartite graph, where $\alpha\leq 1/(6D+14)$  and  $|V_i|=N\geq n/\alpha$ for each $i\in\{1,2\}$. If  $\sigma(e)\geq 2\log_{D-1} (\alpha N)$ for every edge $e\in E(H)$, then $G$ contains a copy of $H^\sigma$ as a subgraph.   
\end{lemma}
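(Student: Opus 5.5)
Following the strategy of Dragani\'c, Krivelevich and Nenadov (the ``rolling back'' method), we build the embedding of $H^\sigma$ inside the expander supplied by Lemma~\ref{lem:expender_in_join}. Apply that lemma to $G$ to get an induced subgraph $G'$ and a good embedding $\phi_0$ of a null graph with $\alpha N$ vertices on each side, mapped into $Y_1'\cup Y_2'$. With $\alpha\le 1/(6D+14)$ we have $(1-4\alpha)/(6\alpha)\ge D+2$ and similarly for the expansion constant. Hence $G'$ is $(6\alpha N, D+2)$-expanding, and goodness with a larger constant implies goodness with $D$. Setting $m=3\alpha N\ge 3n$, $G'$ is $(2m,D+2)$-expanding and $\phi_0$ is $(2m,D)$-good. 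We will only embed $(m,D)$-bipartite graphs, which is fine since $H^\sigma$ has at most $n\le \alpha N$ vertices per side. (We may drop the null graph afterwards using Lemma~\ref{lem:good_removing}, since isolated vertices have degree $0$.)

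Next, order $E(H)=\{e_1,\dots,e_k\}$. First embed the branch vertices $V(H)$ as isolated vertices. This is the case of adding pendent-free vertices, which is handled by taking the null graph $F$ restricted appropriately, placing them respecting the bipartition of $H^\sigma$. We then add the subdivision paths one at a time while maintaining a $(2m,D)$-good embedding $\phi_j$ of $F_j$, the union of $V(H)$ and the first $j$ subdivided paths. Intermediate graphs are subgraphs of $H^\sigma$ plus auxiliary trees, so all degrees stay at most $D$.

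To add the path for $e_{j+1}=uv$ of length $\ell=\sigma(e_{j+1})$, we grow from $\phi_j(u)$ a complete tree $T_u$ of height $h\approx \ell/2$ in which each internal vertex has $D-1$ children (the root gets as many as its remaining degree allows, at least one). We grow $T_v$ from $\phi_j(v)$ similarly. These steps use Theorem~\ref{thm:good_adding}. Since $\ell/2\ge\log_{D-1}(\alpha N)$, each tree has about $(D-1)^{h}\ge \alpha N$ leaves. The sizes must stay below $m$ per side, so we take heights slightly less and keep the leaf sets of size $\alpha N$ on appropriate sides. The parity of $h$ is chosen according to the parity of $\ell$ and the bipartition: because $H^\sigma$ is bipartite, $\phi(u),\phi(v)$ lie in sides consistent with $\ell$. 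We can arrange that the leaves $L_u$ lie in $V_1$ and $L_v$ in $V_2$ with $\mathrm{depth}(T_u)+\mathrm{depth}(T_v)+1=\ell$.

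Since $G$ is $\alpha$-joined and $|L_u|,|L_v|\ge\alpha N$ (after discarding vertices outside $G'$ and adjusting; here we exploit the slack in $N\ge n/\alpha$), there is an edge $xy$ with $x\in L_u$, $y\in L_v$. This gives a $u$--$v$ path of length exactly $\ell$ in $T_u\cup T_v\cup\{xy\}$. Adding the edge $xy$ is not a pendent addition, so we handle it differently: keep only the path, and delete all other tree vertices by repeatedly removing leaves. Lemma~\ref{lem:good_removing} then gives a good embedding of $F_j$ plus the path, up to the edge $xy$. Adding an edge between two embedded vertices only increases $\deg_F$, and $|\phi(F)\cap X|$ is unchanged, so the right side of~\eqref{good-embedding} decreases and goodness is preserved.

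The main obstacle is the size bookkeeping. Each intermediate graph, including $F_j$ and both full trees, must stay $(m,D)$-bipartite, with at most $m$ vertices per side, while the leaf sets still reach $\alpha N$ on the correct sides. This requires balancing heights against $\log_{D-1}(\alpha N)$ and handling parity (odd $\ell$ forces unequal heights). The parity constraint is exactly where bipartiteness of $H^\sigma$ is needed.
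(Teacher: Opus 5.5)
Your overall plan is the paper's proof. You take the expander $G'$ and the good null embedding from Lemma~\ref{lem:expender_in_join}, place the branch vertices, grow two trees per edge with Theorem~\ref{thm:good_adding}, join their leaf sets by $\alpha$-joinedness, and prune with Lemma~\ref{lem:good_removing}. Your explicit check that inserting the joining edge $xy$ preserves goodness (it only raises $\deg_F$ at $x,y$ and leaves $\phi(F)$ unchanged) is a detail the paper leaves implicit.

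\textbf{The gap.} The step you yourself call the main obstacle is left open, and the construction you describe cannot satisfy it. Since $\sigma$ is only bounded below, $\ell$ can be much larger than $2\log_{D-1}(\alpha N)$ (e.g.\ linear in $n$). A complete $(D-1)$-ary tree of height about $\ell/2$ then has about $(D-1)^{\ell/2}\gg m$ vertices. Taking the heights ``slightly less'' cannot fix this, because the two depths must add up to $\ell-1$ anyway. Merely keeping $\alpha N$ leaves is not enough either: a pruned tree of depth $h$ with $\alpha N$ leaves can still have on the order of $h\,\alpha N$ vertices.

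\textbf{The missing idea} is exactly the paper's construction: make the deep part of each tree a bare path. Let $T_u$ be a path of length $h_u-k$ from $\phi(u)$, followed by a $(D-1)$-ary tree of height $k=\lceil\log_{D-1}(\alpha N)\rceil$ trimmed to $\lceil\alpha N\rceil$ leaves (about $\lceil\alpha N/(D-1)^{k-j}\rceil$ vertices at depth $j$). Define $T_v$ likewise.
\begin{itemize}
\item The path portions lie on the final $u$--$v$ path, so together with $F_j$ they span a subgraph of $H^\sigma$, with at most $n\le\alpha N$ vertices per side.
\item Each bushy part adds only $O(\alpha N)$ vertices.
\item This is what keeps $F_j\cup T_u\cup T_v$ within the $(m,D)$-bipartite regime that Theorem~\ref{thm:good_adding} requires.
\end{itemize}
The lower bound on $\sigma$ is used only to make room for the two bushy parts of height $k$. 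This holds up to an additive constant, coming from $h_u+h_v=\ell-1$ and from a root possibly having a single free slot; the paper glosses over this too.

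\textbf{Smaller slips.}
\begin{itemize}
\item Your parity remark is backwards. For odd $\ell$, equal depths $(\ell-1)/2$ work (the paper's odd case); it is even $\ell$ that forces depths differing by one. In fact, since $\phi(u),\phi(v)$ respect the bipartition of $H^\sigma$, every split $h_u+h_v=\ell-1$ automatically puts the two leaf sets on opposite sides.
\item Nothing needs to be discarded before invoking $\alpha$-joinedness. All leaves are already embedded in $G'$, and since $G'$ is induced, the edge supplied by $G$ lies in $G'$.
\item $(1-4\alpha)/(6\alpha)\ge D+5/3$, not $D+2$. This is harmless, since only $\ge D$ is needed for the goodness of $\phi_0$.
\end{itemize}
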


\begin{proof}
	Suppose that $(A_1,A_2)$ is a bipartition of $H^\sigma$ and let $|V(H)\cap A_1|=r_1$ and $|V(H)\cap A_2|=r_2$. By Lemma~\ref{lem:expender_in_join}, there exists an induced subgraph $G'=G'(V'_1,V'_2)$ of $G$ which is $(6\alpha N,D+2)$-expanding since $D+2\leq (1-2\alpha)/(6\alpha)$. Also, there exists  a $(6\alpha N, (1-2\alpha)/(6\alpha))$-good embedding from a null graph $F$ into $G'$, where $F$ has $r_i$ vertices in $V_i$, i.e. we can embed vertices of $H$ into $G'$ as a good embedding. This can be done because $r_i\leq n\leq \alpha N$.
	Now, consider an arbitrary ordering on the edge set $E(H)$, say $e_1,\ldots, e_m$. 
	For each $ 0 \leq i \leq m $,
	let $H_i$ be the spanning subgraph of $H$ induced on the edges $e_1,\ldots, e_i$. 
	Note that $ H_0 $ is just a null graph on the vertex set $ V(H) $.
	We inductively embed $H_i^\sigma$ into $G'$ for each $ 0\leq i\leq m $, such that the embedding $\phi_i: H_i^\sigma \hookrightarrow G'$ is $(6\alpha N,D)$-good. Since $H_0^\sigma=F$ and $D\leq  (1-2\alpha)/(6\alpha)$, we already have a $(6\alpha N,D)$-good embedding $\phi_0: H_0^\sigma\hookrightarrow G'$.  
	
	Now, suppose that we have already embedded $H_{i-1}^\sigma$ and let $e_i=v_1v_2$. We want to embed a path of length $\sigma(e_i)$ between $v_1$ and $v_2$. If $\sigma(e_i)$ is even, then $v_1$ and $v_2$ are in the same part of $G'$ and in case $\sigma(e_i)$ is odd, $v_1$ and $v_2$ are in different parts of $G'$. 
	
	First suppose that $\sigma(e_i)$ is odd. Consider two isomorphic trees $T_1$ and $T_2$ rooted at $v_1$ and $v_2$, where $T_j$ consists of a path of length $(\sigma(e_i)-2k-1)/2$ starting from $v_j$  appending to a tree of maximum degree $D$ with $\lceil \alpha N\rceil $ leaves and height $k= \lceil\log_{D-1} (\alpha N)\rceil$, for every $j\in \{1, 2\}$.
	
	Now, we are going to apply Theorem~\ref{thm:good_adding} to extend embedding of $H^{\sigma}_{i-1}$ to $H^{\sigma}_{i-1}\cup T_1\cup T_2$. The former bipartite graph has at most $n+2\alpha N$ vertices in each part and by the choice $N$ is a $(3\alpha N, D)$-bipartite graph. Therefore, $\phi_{i-1}$ can be extended to a $(6\alpha N, D )$-good embedding of $H^{\sigma}_{i-1}\cup T_1\cup T_2$. Since $v_1$ and $v_2$ are in different parts in $G'$ and the heights of $T_1$ and $T_2$ are the same, the sets of 	
	leaves of $T_1$ and $T_2$ are in different parts in $G'$. As $G$ is $\alpha$-joined, there exists an edge $f$ between a leaf of $T_1$, say $x_1$, and a leaf of $T_2$, say $x_2$. This completes the path $P_i$ of length $\sigma(e_i)$ which starts from $v_1$, passes through $T_1$ to $x_1$ and then continues from $f$ to $x_2$ and then from  $x_2$ to $v_2$ via $T_2$.
	
	Finally, using Lemma~\ref{lem:good_removing}, we remove vertices of $(V(T_1)\cup V(T_2))\setminus V(P_i)$ to obtain an embedding $\phi_i: H^\sigma_i \hookrightarrow G'$ which is $(6\alpha N, D)$-good.  
	
	In the case that $\sigma(e_i)$ is even, we use the same argument to embed the path of length $\sigma(e_i)$ between $v_1$ and $v_2$. The only difference is that the trees $T_1$ and $T_2$ are rooted at $v_1$ and $v_2$, where $T_j$ consists of a path of length $\sigma(e_i)/2-k-2+j$ starting from $v_j$  appending to a tree of maximum degree $D$ with $\lceil \alpha N\rceil $ leaves and height $k= \lceil\log_{D-1} (\alpha N)\rceil$. Note that in this case, the parity of the heights of $T_1$ and $T_2$ are different and so the set of leaves of $T_1$ and $T_2$ will be in different parts of $G'$. Therefore, we can use the $\alpha$-jointness of $G$ to connect the set of leaves of $T_1$ and $T_2$ with an edge.  
\end{proof}

Now, we are ready to prove Theorem \ref{thm:main}.
\begin{proof}
	Let $\alpha={1}/{(6D+14)}$, $c_1=(6D+14) r^{-6\log \alpha/\alpha}$, $c_2=4D^2\log(6D+14)r^2\log^2 r$ and $c_3=6D+14$. Also, let $N=c_1n$ and $p=c_2/n$ and set $\delta=\sqrt{\frac{7\log (c_1/c_3)}{c_2c_3}}$. One can check that $  3/2\geq \delta >\sqrt{6{\log({c_1e}/{c_3})/(c_2c_3)}}$. Therefore,
	by Lemma~\ref{quasi} there exists a bipartite graph $G=G(V_1,V_2)$ such that $|V_1|=|V_2|=N$ and
	\begin{enumerate}
		\item $e(G)= (1\pm n^{-1/4}) pN^2$. In particular, $e(G)\leq r^{400D\log D}n$.
		\item For every two subsets $U\subseteq V_1$ and $W\subseteq V_2$, with $|U|=u$ and $|W|=w$, where $u,\, w\geq c_3n$, we have 
		\[
		e_G(U, W)= (1\pm \delta) puw.
		\]    
	\end{enumerate}
	
	Let $G'$ be a spanning subgraph of $G$ where $e(G')\geq 1/r e(G)$ and let $F$ be the graph obtained from $G$ by removing all edges of $G'$. Suppose that $G'$ does not contain any copy of $H^\sigma$ as a subgraph. We claim that for every sets $V_1'\subseteq V_1$ and $V'_2\subseteq V_2$ with $|V'_1|=|V'_2|=N'$, we have 
	\begin{align*}\label{eq:main}
e_F(V'_1,V'_2)\geq (1-\delta)\left(1-\left(\dfrac{n}{\alpha N'}\right)^\lambda\right) pN'^2,
	\end{align*}
	where $\lambda=-\alpha/(3\log \alpha)$.
	
	We prove this claim by induction on $N'$. Let $V_i'\subseteq V_i$ for each $i\in \{1, 2\}$ such that $|V_1'|=|V_2'|=N'$.
	If $N'< n/\alpha$, then the right hand side is negative and we are done. So, suppose that $N'\geq n/\alpha$. Let $G''$ be the induced subgraph of $G'$ on $V'_1\cup V'_2$. Since $G''$ does not contain $H^\sigma$,  Lemma~\ref{lem:alphajoined} implies that $G''$ is not $\alpha$-joined. So, there are two subsets $A^1_1\subseteq V'_1$, $A^1_2\subseteq V'_2$ with $|A^1_1|=|A^1_2|=\alpha N'$, such that $e_{G''}(A^1_1,A^1_2)=0$. Now, for each $i\in \{1,2\}$ partition $V'_i\setminus A^1_i$, into $1/\alpha-1$ subsets $A^2_i,\ldots, A^{1/\alpha}_i$ each of size $\alpha N'$. Also, let $B_i=\cup_{j=2}^{1/\alpha} A^j_i$. Therefore, by induction hypothesis, we have
	
	\begin{align*}
	e_F(V_1',V_2')&= e_G(A^1_1,A^1_2)+ e_F(B_1,B_2)+ \sum_{i=2}^{1/\alpha} e_F(A^1_1,A^i_2) +\sum_{i=2}^{1/\alpha} e_F(A^i_1,A^1_2) \\
	&\geq (1-\delta)pN'^2\\
	&~~~
	\left[ \alpha^2 + \left(1-\left(\dfrac{n}{\alpha(1-\alpha)N'}\right)^\lambda\right) (1-\alpha)^2+  (2/\alpha-2)\left(1-\left(\dfrac{n}{\alpha^2 N'}\right)^\lambda\right) \alpha^2 \right] \\
	&= (1-\delta)pN'^2\\
	&~~~
	\left[  \alpha^2 + (1-\alpha)^2 - \left(\dfrac{n}{\alpha(1-\alpha)N'}\right)^\lambda (1-\alpha)^2 + (2\alpha-2\alpha^2)- \left(\dfrac{n}{\alpha^2 N'}\right)^\lambda(2\alpha-2\alpha^2) \right] \\
	&=  (1-\delta)pN'^2 \left[ 1- \left(\dfrac{n}{\alpha N'}\right)^\lambda\left( (1-\alpha)^{2-\lambda} +2\alpha^{1-\lambda} -2\alpha^{2-\lambda}\right) \right].
	\end{align*}
	
	Consider the function 
	\[
	f(\alpha)= (1-\alpha)^{2-\lambda}+2\alpha^{1-\lambda}- 2 \alpha^{2-\lambda} = 
	(1-\alpha)^{2-\lambda}+2\alpha e^{\alpha/3}- 2 \alpha^{2} e^{\alpha/3}.
	\]
	 If we write the Taylor's expansion of $f$ and since $\alpha$ is small by the growth of $D$, we have 
	\begin{align*}
	f(\alpha)&= 1-(2-\lambda) \alpha+ (2-\lambda)(1-\lambda) \dfrac{\alpha^2}{2}+ 2\alpha+2\alpha^2/3-2\alpha^2 +O(\alpha^3)\\
	&= 1-\dfrac{\alpha^2}{3}-\dfrac{\alpha^2}{3\log \alpha}+o(\alpha^2)<1.
	\end{align*}
	Hence, 
	\[e_F(V_1',V_2') \geq   (1-\delta)pN'^2 \left[ 1- \left(\dfrac{n}{\alpha N'}\right)^\lambda \right], \]
	as desired. 
	Therefore,
	
	\begin{align*}
	(1-\delta)pN^2 \left[ 1- \left(\dfrac{n}{\alpha N}\right)^\lambda \right] \leq e(F) \leq \left(1-\dfrac{1}{r}\right) e(G)\leq \left(1-\dfrac{1}{r}\right) (1+n^{-1/4}) pN^2. 
	\end{align*}
	Thus,
	\[
	(1-\delta) \left[ 1- \left(\dfrac{n}{\alpha N}\right)^\lambda \right] \leq \left(1-\dfrac{1}{r}\right) (1+n^{-1/4}).
	\]
	On the other hand by the choice of $c_1,\, c_2$ and $c_3$ we have 
	\[(1-\delta) \left[ 1- \left(\dfrac{n}{\alpha N}\right)^\lambda \right]\geq  \left(1- \dfrac{1}{r\log r}\right) \left(1-\dfrac{1}{r^2}\right),\]
	which is a contradiction.
	
	This contradiction shows that there should be a copy of $H^{\sigma}$ in $G'$ implying 
	that~$G\longrightarrow(H^{\sigma})_r$. Also, by Lemma \ref{quasi} , we have 
	\[
	\widehat{R}_r(H)\leq |E(G)|\leq (1+n^{-1/4})pN^2 \leq r^{400D\log D}n,
      \]
      as required
\end{proof}

\begin{remark}
  Lemma~\ref{lem:alphajoined} can be viewed as a universality statement. The $\alpha$-joined bipartite graphs under consideration contain every bipartite subdivision of the type described. In particular, the proof of Theorem~\ref{thm:main} shows that in any edge-coloring of the bipartite graph under study, one can always find a monochromatic subgraph that contains all such bipartite subdivisions. 
  Following \cite{KRSSZ}, a graph $G$ is said to be
  $r$-\textit{partition universal} for a class of graphs
  $\mathcal{H}$ if, under every $r$-edge coloring of $G$, there
  exists a color class that contains copies of all graphs in~$\mathcal{H}$.  Theorem~\ref{thm:main} establishes that the bipartite graph we color is $k$-partition universal for the class of all bipartite subdivisions described above.
\end{remark}


\providecommand{\bysame}{\leavevmode\hbox to3em{\hrulefill}\thinspace}
\providecommand{\MR}{\relax\ifhmode\unskip\space\fi MR }
\providecommand{\arXiv}{\relax\ifhmode\unskip\space\fi arXiv }
\providecommand{\MRhref}[2]{%
  \href{http://www.ams.org/mathscinet-getitem?mr=#1}{#2}
}
\renewcommand{\MR}[1]{%
  \href{http://www.ams.org/mathscinet-getitem?mr=#1}{MR~#1}
}
\renewcommand{\arXiv}[1]{%
  Available as \href{https://arxiv.org/abs/#1}{arXiv:#1}.
}
\providecommand{\href}[2]{#2}

\appendix
\normalsize
\renewcommand\thetheorem{\Roman{theorem}}
\section{Proof of Theorem~\ref{thm:good_adding}} \label{app}
\begin{proof}
	The argument parallels the strategy of Theorem~2.3 in~\cite{Rolling}, suitably adapted to the bipartite setting.
	
	We proceed by induction on the number of pendant vertices added.  
	It is enough to treat the case where $F'$ is obtained from $F$ by adjoining a single new leaf $v$ to an existing vertex $w\in V(F)$; the general statement then follows by repeated application.
	
	Let $G=G(V_1,V_2)$ be as in the statement, and for $X\subseteq V_i$ and an embedding $f:H\hookrightarrow G$, define
	\[
	R(X,f) \;=\; \big|N_G(X)\setminus f(H)\big| \;-\; \sum_{x\in X} \big(D - \deg_H(f^{-1}(x))\big) \;-\; |f(H)\cap X|.
	\]
	Here, $\deg_H(f^{-1}(x))=0$ if $x$ is not in the image of $f$.
	
	Let $Y := N_G(\phi(w))\setminus \phi(F)$.  
	For each $a\in Y$, we can extend $\phi$ to an embedding $\phi_a:F'\hookrightarrow G$ by mapping $v\mapsto a$ while keeping $\phi_a$ identical to $\phi$ on $F$.
	
	Our goal is to find some $a\in Y$ for which
	\[
	R(X,\phi_a) \ge 0
	\quad\text{for all } X\subseteq V_i, \; i\in\{1,2\}, \; |X|\le 2n.
	\]
	Assume for contradiction that no such $a$ exists.  
	Then, for each $a\in Y$, there is some $X_a\subseteq V_i$ with $|X_a|\le 2n$ and $R(X_a,\phi_a) < 0$.  
	Since $\phi$ is $(2n,D)$-good, we have $R(X,\phi)\ge 0$ for all subsets $X$ with $|X|\le 2n$.
	
	Note that 
	\begin{align*}
	R(X_a,\phi_a) - R(X_a,\phi) 
	&= -\mathbb{1}[a \in N_G(X_a)] + \mathbb{1}[a\in X_a] + \mathbb{1}[\phi(w)\in X_a] - \mathbb{1}[a\in X_a]\\
	&= \mathbb{1}[\phi(w)\in X_a] - \mathbb{1}[a \in N_G(X_a)].
	\end{align*}
	Thus, $R(X_a,\phi_a) < 0$ implies $R(X_a,\phi) = 0$, $\phi(w)\notin X_a$, and $a\in N_G(X_a)$.
	
	We require three auxiliary observations.
	
	\begin{claim}\label{claim:smallX}
		If $ X \subseteq V_i $ for $i\in \{1, 2\}$ is such that $ R(X,\phi) = 0 $ and $ |X|\leq 2n $ then $ |X|\leq n $.
	\end{claim}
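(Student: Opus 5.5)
The plan is a direct counting argument. It compares the lower bound on $|N_G(X)|$ coming from the expansion of $G$ with the upper bound on $|N_G(X)\setminus\phi(F)|$ forced by $R(X,\phi)=0$. No induction is needed. The only ingredients are that $G$ is $(2n,D+2)$-expanding and that $F$ is $(n,D)$-bipartite, so that $\phi(F)$ meets each part of $G$ in at most $n$ vertices.

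Fix $X\subseteq V_i$ with $|X|\le 2n$ and $R(X,\phi)=0$, and write $V_{3-i}$ for the other part. Unwinding the definition of $R$, and using $\deg_F(\phi^{-1}(x))\ge 0$, we get
\[
|N_G(X)\setminus\phi(F)| \;=\; \sum_{x\in X}\big(D-\deg_F(\phi^{-1}(x))\big)+|\phi(F)\cap X| \;\le\; D|X|+|\phi(F)\cap X|\;\le\;(D+1)|X|.
\]
For the lower bound, $|X|\le 2n$ and the $(2n,D+2)$-expansion give $|N_G(X)|\ge (D+2)|X|$. Since $N_G(X)\subseteq V_{3-i}$ and $|\phi(F)\cap V_{3-i}|\le n$ (because $F$ is $(n,D)$-bipartite and $\phi$ respects the bipartition), we obtain
\[
|N_G(X)\setminus\phi(F)|\;\ge\;(D+2)|X|-n.
\]

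Combining the two displays gives $(D+2)|X|-n\le (D+1)|X|$, that is, $|X|\le n$, which is the claim. In fact one gets $2|X|\le n+|\phi(F)\cap X|$, which is slightly stronger.

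The only point needing care is the step $|\phi(F)\cap V_{3-i}|\le n$. It requires that $\phi$ maps each side of $F$ into a fixed side of $G$, so that the side of $F$ landing in $V_{3-i}$ has at most $n$ vertices. This holds because $F$ is connected componentwise to the embedding of a bipartite graph into a bipartite host, and the $(n,D)$-bipartite hypothesis bounds both sides of $F$ by $n$. Hence $\phi(F)\cap V_{3-i}$ has size at most $n$ regardless of which side of $F$ lands there.
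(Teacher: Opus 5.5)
Your argument is correct and is the paper's: combine $|N_G(X)|\ge (D+2)|X|$ with $|\phi(F)\cap V_{3-i}|\le n$ and $R(X,\phi)=0$ to get $0\ge |X|-n$.

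One caveat concerns your justification of $|\phi(F)\cap V_{3-i}|\le n$. The appeal to connectivity does not work, because $F$ may be disconnected (e.g.\ the null graph $H_0^\sigma$), and its components could a priori be embedded with opposite orientations. The bound instead rests on a convention the paper also uses tacitly: the bipartition witnessing that $F$ is $(n,D)$-bipartite is the one induced by $\phi$, so $\phi$ maps the $j$-th side of $F$ into $V_j$.
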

	\begin{proof}
		Since $G$ is $(2n,D+2)$-expanding, $|N_G(X)| \ge (D+2)|X|$.  
		Also, in each part $V_i$, the embedding $\phi(F)$ occupies at most $n$ vertices. Hence
		\[
		0 = R(X,\phi) \ge (D+2)|X| - n - D|X| - |X| = |X| - n,
		\]
		yielding $|X|\le n$.
	\end{proof}
	
	\begin{claim}\label{claim:submod}
		The map $X\mapsto R(X,\phi)$ is submodular:
		\[
		R(A\cup B,\phi) + R(A\cap B,\phi) \;\le\; R(A,\phi) + R(B,\phi)
		\quad\text{for all }A,B\subseteq V(G).
		\]
	\end{claim}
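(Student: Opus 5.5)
I will prove submodularity by splitting $R(X,\phi)$ into its three terms and checking each one separately:
\[
R(X,\phi) = g(X) - \ell(X) - m(X),
\]
where
\[
g(X)=\big|N_G(X)\setminus \phi(F)\big|,\qquad
\ell(X)=\sum_{x\in X}\big(D-\deg_F(\phi^{-1}(x))\big),\qquad
m(X)=|\phi(F)\cap X|.
\]
A sum of submodular functions is submodular, and the negative of a modular function is modular. So it is enough to show that $g$ is submodular and that $\ell$ and $m$ are modular, meaning they satisfy the identity $h(A\cup B)+h(A\cap B)=h(A)+h(B)$.

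Modularity of $\ell$ and $m$ is immediate, because each is a sum over $X$ of a weight depending only on the individual vertex:
\[
\ell(X)=\sum_{x\in X} w(x)\ \text{ with } w(x)=D-\deg_F(\phi^{-1}(x)),\qquad
m(X)=\sum_{x\in X}\mathbb{1}[x\in\phi(F)].
\]
For any such additive function, inclusion--exclusion for indicator sums gives equality in the modular identity.

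For $g$, I use two elementary facts about neighbourhoods:
\[
N_G(A\cup B)=N_G(A)\cup N_G(B),\qquad N_G(A\cap B)\subseteq N_G(A)\cap N_G(B).
\]
Both survive intersecting with the fixed set $V(G)\setminus\phi(F)$. Writing $S=V(G)\setminus\phi(F)$, this gives
\begin{align*}
g(A\cup B)+g(A\cap B)
&\le |(N_G(A)\cup N_G(B))\cap S| + |N_G(A)\cap N_G(B)\cap S|\\
&= |N_G(A)\cap S| + |N_G(B)\cap S| = g(A)+g(B),
\end{align*}
where the middle equality is inclusion--exclusion for the two sets $N_G(A)\cap S$ and $N_G(B)\cap S$. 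Adding this inequality to the two modular identities (with signs) gives $R(A\cup B,\phi)+R(A\cap B,\phi)\le R(A,\phi)+R(B,\phi)$.

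The only mild subtlety is that $R$ was defined for $X\subseteq V_i$, while the claim is stated for all $A,B\subseteq V(G)$. The definitions of $g$, $\ell$ and $m$ make sense verbatim for arbitrary subsets of $V(G)$, and the argument above never uses that the sets lie in one part. So the claim holds as stated, and it certainly holds in the case actually used, $A,B\subseteq V_i$, where $A\cup B$ and $A\cap B$ again lie in $V_i$. I do not expect any step to be a real obstacle: the whole proof is the monotonicity-and-union behaviour of $N_G$ combined with inclusion--exclusion.
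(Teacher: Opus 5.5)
Your proposal is correct and follows the same approach as the paper: you split $R(X,\phi)$ into the neighbourhood term, which is submodular, and the two vertex-weighted sums, which are modular. The paper states this in one line, and your argument supplies the details, namely $N_G(A\cup B)=N_G(A)\cup N_G(B)$, $N_G(A\cap B)\subseteq N_G(A)\cap N_G(B)$, and inclusion--exclusion.
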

	\begin{proof}
		The first term of $ R(X,\phi) $ is a submodular function of $ X $, and the other two are modular,
		so the claim follows.
	\end{proof} 
	\begin{claim}\label{claim:union}
		If for $ A, B\subseteq V(G) $ it holds that $ R(A, \phi) = R(B, \phi) = 0 $, and $ |A|$, $|B| \le n $,
		then $ R(A\cup B, \phi) = 0 $ and $ |A\cup B|\le n$.
	\end{claim}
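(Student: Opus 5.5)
The natural route combines the submodularity of Claim~\ref{claim:submod} with the goodness of $\phi$, which gives $R(X,\phi)\ge 0$ for every $X$ of size at most $2n$. The one subtlety is that $A$ and $B$ might not lie in the same part. I would therefore first treat the case $A,B\subseteq V_i$ for a single $i$, since this is how the claim will be used: the sets $X_a$ from the main argument all sit on the side opposite to $\phi(w)$'s neighbourhood. If $A$ and $B$ lie in different parts, one can instead verify directly that $R$ is additive across the two parts. Then $R(A\cup B,\phi)=R(A,\phi)+R(B,\phi)=0$ in the suitable sense, and the size bound is applied to each part separately. I will concentrate on the same-part case.

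\emph{Step 1 (zero on the union).} Since $|A|,|B|\le n$, we have $|A\cup B|\le 2n$ and $|A\cap B|\le n$. Goodness of $\phi$ then yields both $R(A\cup B,\phi)\ge 0$ and $R(A\cap B,\phi)\ge 0$. Claim~\ref{claim:submod} gives
\[
R(A\cup B,\phi)+R(A\cap B,\phi)\le R(A,\phi)+R(B,\phi)=0 .
\]
Two nonnegative quantities summing to at most $0$ must both vanish, so $R(A\cup B,\phi)=0$, and also $R(A\cap B,\phi)=0$.

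\emph{Step 2 (size bound).} The union satisfies $R(A\cup B,\phi)=0$ and $|A\cup B|\le 2n$. Claim~\ref{claim:smallX} therefore applies directly and gives $|A\cup B|\le n$. That claim is where the $(2n,D+2)$-expansion of $G$ is used.

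The only step that needs care is the order of these two steps. Goodness must be invoked at scale $2n$, and this is exactly why the hypothesis $|A|,|B|\le n$ is required: it guarantees that the union stays within the range $|X|\le 2n$ where $R\ge 0$ is known. Only after Step 1 can Claim~\ref{claim:smallX} bring the size back down to $n$, which keeps the statement iterable for the subsequent union argument over all the sets $X_a$. Apart from this, and the mild bookkeeping needed if $A$ and $B$ lie in different parts, the argument is immediate.
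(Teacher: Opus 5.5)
Your argument is correct and is essentially the paper's: goodness gives $R(A\cup B,\phi)\ge 0$ and $R(A\cap B,\phi)\ge 0$, submodularity (Claim~\ref{claim:submod}) then forces $R(A\cup B,\phi)=0$, and Claim~\ref{claim:smallX} gives $|A\cup B|\le n$. You are right to invoke goodness at scale $2n$; the paper's text says ``$(n,D)$-good'', but it is the assumed $(2n,D)$-goodness that covers $|A\cup B|\le 2n$. You are also right that only the case of $A,B$ in the same part matters, since all the sets $X_a$ lie in the part containing $\phi(w)$.
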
 
	\begin{proof}
		Since $\phi$ is $(n,D)$-good and $|A\cup B|, |A\cap B|\le 2n$, both $R(A\cup B,\phi)$ and $R(A\cap B,\phi)$ are non-negative.  
		By Claim~\ref{claim:submod} we get $R(A\cup B,\phi)=0$, and then Claim~\ref{claim:smallX} yields $|A\cup B|\le n$.
	\end{proof}
	
	Returning to the main argument, for each $a\in Y$ we have $R(X_a,\phi)=0$ and $|X_a|\le n$.  
	Let $X^* := \bigcup_{a\in Y} X_a$.  
	If $Y\neq\emptyset$, repeated application of Claim~\ref{claim:union} gives $R(X^*,\phi)=0$ and $|X^*|\le n$.
	Now set $X' := X^* \cup \{\phi(w)\}$.  
	Since $\phi(w)\notin X_a$ for any $a$, we have $\phi(w)\notin X^*$ but $Y\subseteq N_G(X^*)$.  
	Therefore, the neighborhood term in $R(X^*,\phi)$ equals that in $R(X',\phi)$, while the degree-deficit term strictly increases by at least $1$ when passing from $X^*$ to $X'$ (because $\deg_F(w)\le D-1$).  
	That means
	\[
	R(X', \phi) \leq R(X^*, \phi)- \big(D -\deg_F(w)\big) \leq  R(X^*, \phi)-1 < 0.
	\]  
	contradicting $ R(X', \phi) \ge 0 $.
	
	Thus, such an $a\in Y$ exists, completing the inductive step and the proof
\end{proof}
\section{Proof of Lemma~\ref{lem:good_removing}}\label{app2}.
Here we give a proof of Lemma~\ref{lem:good_removing}. The argument follows the same  as Lemma~2.4 in~\cite{Rolling}.
It suffices to verify the statement when $F'$ is obtained from $F$ by removing exactly one vertex $v$ of degree~$1$; the general case then follows by iterating the argument.

Let $ \phi' $ be a restriction of $ F $ to such $ F' $, and let
$ w \in F' $ denote the unique neighbor of $ v $.
Suppose that $ X\subseteq V_i $ is an arbitrary subset of vertices with $|X|\leq n$.

We distinguish the following two cases. 

\smallskip
\noindent
\emph{Case 1:} $\phi(v) \in X$. 
Since $v$ and $w$ are adjacent in $F$ and $G$ is bipartite, we have $\phi(w) \notin X$. Deleting $v$ leaves the left-hand side of~\eqref{good-embedding} unchanged. On the right-hand side, the degree term $\sum_{v\in X} (D - \deg_F(\phi^{-1}(v)))$ increases by $1$, while $|\phi(F) \cap X|$ decreases by $1$. These changes exactly offset, so inequality~\eqref{good-embedding} still holds for $\phi'$.

\smallskip
\noindent
\emph{Case 2:} $\phi(v) \notin X$.  
If $\phi(w) \notin X$, then removing $v$ has no effect on either side of~\eqref{good-embedding}, and the claim follows immediately.  
If instead $\phi(w) \in X$, then necessarily $\phi(w) \in V_i$ and $\phi(v) \in V_{i+1}$. Removing $v$ adds one new vertex to $N_G(X) \setminus \phi(F)$, increasing the left-hand side by~$1$. On the right-hand side, the degree term increases by~$1$ (since $\phi(v) \in N_G(\phi(w))$), while $|\phi(F) \cap X|$ remains unchanged. Thus~\eqref{good-embedding} continues to hold for $\phi'$.
\smallskip
In all cases, the $(n,D)$-good property in Inequality \ref{good-embedding} is preserved when a degree-$1$ vertex is removed.

\endgroup
\end{document}